\newtheorem{theorem}{Theorem}[section]
\newtheorem{lemma}[theorem]{Lemma}
\newtheorem{corollary}[theorem]{Corollary}
\newtheorem{question}[theorem]{Question}
\theoremstyle{definition}
\newtheorem{definition}[theorem]{Definition}
\newtheorem{proposition}[theorem]{Proposition}
\newtheorem{example}[theorem]{Example}
\begin{document}

\title[On paratopological groups]
{On paratopological groups}

\author{Fucai Lin}
\address{(Fucai Lin): Department of Mathematics and Information Science,
Zhangzhou Normal University, Zhangzhou 363000, P. R. China}
\email{linfucai2008@yahoo.com.cn}

\author{Chuan Liu*}
\address{(Chuan Liu): Department of Mathematics,
Ohio University Zanesville Campus, Zanesville, OH 43701, USA}
\email{liuc1@ohio.edu}
\thanks{The first author is supported by the NSFC (No. 10971185, 10971186) and the Natural Science Foundation of Fujian Province (No. 2011J05013) of China.}

\thanks{*corresponding author}

\keywords{Paratopological groups; submetrizable; $\omega$-narrow; Abelian groups; developable; quasi-developable; $k_{\omega}$-paratopological groups; H-closed paratopological groups; pseudocompact.}
\subjclass[2000]{54E20; 54E35; 54H11; 22A05}

\begin{abstract}
In this paper, we firstly construct a Hausdorff non-submetrizable paratopological group $G$ in which every point is a $G_{\delta}$-set, which gives a negative answer to Arhangel'ski\v{\i}\ and Tkachenko's question
[Topological Groups and Related Structures, Atlantis Press and
World Sci., 2008]. We prove that each first-countable Abelian paratopological group is submetrizable. Moreover, we discuss developable paratopological groups and construct a non-metrizable, Moore paratopological group. Further, we prove that a regular, countable, locally $k_{\omega}$-paratopological group is a discrete topological group or contains a closed copy of $S_{\omega}$. Finally, we discuss some properties on non-H-closed paratopological groups, and show that  Sorgenfrey line is not H-closed, which gives a negative answer to Arhangel'ski\v{\i}\ and Tkachenko's question
[Topological Groups and Related Structures, Atlantis Press and
World Sci., 2008]. Some questions are posed.
\end{abstract}

\maketitle

\section{Introduction}
A {\it semitopological group} $G$ is a group $G$ with a topology such
that the product map of $G\times G$ into $G$ is separately
continuous. A {\it paratopological group} $G$ is a group $G$ with a topology such that
the product map of $G \times G$ into $G$ is jointly continuous. If $G$ is a paratopological group  and the inverse operation of $G$ is continuous, then $G$ is called a {\it topological group}.  However, there exists a paratopological group which is not a
topological group; Sorgenfrey line (\cite[Example
1.2.2]{E1989}) is such an example.  Paratopological groups were discussed and many results have been obtained \cite{AR2005, A2008, A2009, CJ, LC, LL2010, LC1, LP}.

\begin{proposition}\label{p0}\cite{R2001}
For a group with topology $(G, \tau)$ the following conditions are equivalent:
\begin{enumerate}
\item $G$ is a paratopological group;

\item The following Pontrjagin conditions for basis $\mathscr{B}=\mathscr{B}_{\tau}$ of the neutral element $e$ of $G$ are satisfied.
\begin{enumerate}
\item ($\forall U, V\in\mathscr{B}$)($\exists W\in\mathscr{B}$): $W\subset U\cap V;$

\item ($\forall U\in\mathscr{B}$)($\exists V\in\mathscr{B}$): $V^{2}\subset U;$

\item ($\forall U\in\mathscr{B}$)($\forall x\in U$)($\exists V\in\mathscr{B}$): $Vx\subset U;$

\item ($\forall U\in\mathscr{B}$)($\forall x\in G$)($\exists V\in\mathscr{B}$): $xVx^{-1}\subset U;$

\smallskip
\end{enumerate}
\end{enumerate}

The paratopological group $G$ is Hausdorff if and only if

(e)  $\bigcap\{UU^{-1}: U\in\mathscr{B}\}=\{e\}$;\\

The paratopological group $G$ is a topological group if and only if

(f)  ($\forall U\in\mathscr{B}$)($\exists V\in\mathscr{B}$): $V^{-1}\subset U.$\\

\end{proposition}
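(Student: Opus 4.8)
The plan is to prove the equivalence $(1)\Leftrightarrow(2)$ in two directions and then to check the two supplementary criteria (e) and (f) against the reconstructed topology. The direction $(1)\Rightarrow(2)$ is the routine one: I would take $\mathscr{B}$ to be a base of open neighbourhoods of $e$ and read off each Pontrjagin condition from a standard continuity statement. Condition (a) is immediate, since $U\cap V$ is again an open neighbourhood of $e$ and hence contains a basic one. Condition (b) is joint continuity of the product at $(e,e)$, using $ee=e$. For (c) I would use that the right translation $r_x\colon y\mapsto yx$ is continuous (it is the product with the second coordinate fixed); since $r_x(e)=x\in U$ and $U$ is open, continuity yields a basic $V$ with $Vx=r_x(V)\subseteq U$. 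For (d) I would use that the conjugation $y\mapsto xyx^{-1}=(r_{x^{-1}}\circ\ell_x)(y)$ is continuous, being a composite of the left translation $\ell_x\colon y\mapsto xy$ and a right translation, both continuous in any paratopological group, and that it fixes $e$.

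The substance is in $(2)\Rightarrow(1)$. Given a family $\mathscr{B}$ with $e\in U$ for each $U\in\mathscr{B}$ and satisfying (a)--(d), I would define a topology $\tau$ by declaring $O\subseteq G$ open iff for every $x\in O$ there is $U\in\mathscr{B}$ with $Ux\subseteq O$. Condition (a) gives that $\tau$ is closed under finite intersection (the only nontrivial topology axiom), and condition (c) is exactly the statement that every $U\in\mathscr{B}$, and more generally every right translate $Ux$, is $\tau$-open; hence $\{Ux:U\in\mathscr{B}\}$ is a neighbourhood base at $x$ and $\mathscr{B}$ is a neighbourhood base at $e$. It remains to check that the product is jointly continuous at an arbitrary point $(a,b)$. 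Here is the crux: given a basic neighbourhood $U(ab)$ of $ab$, I would first use (b) to pick $V\in\mathscr{B}$ with $V^{2}\subseteq U$, then use the conjugation condition (d) to pick $W\in\mathscr{B}$ with $aWa^{-1}\subseteq V$, and finally observe
\[
(Va)(Wb)=V\,(aWa^{-1})\,(ab)\subseteq V\cdot V\cdot(ab)\subseteq U(ab).
\]
I expect this step, namely commuting the second factor's neighbourhood past the group element $a$, which is impossible without (d), to be the main obstacle; it is precisely why conjugation appears among the Pontrjagin conditions.

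For the Hausdorff criterion I would use that translations are $\tau$-homeomorphisms to reduce the separation of any two points $a\neq b$ (via $r_{a^{-1}}$) to separating $e$ from an arbitrary $g\neq e$, and compute when two basic neighbourhoods meet: $Ua\cap Vb\neq\emptyset$ iff $ba^{-1}\in V^{-1}U$. Thus $G$ is Hausdorff iff $\bigcap\{U^{-1}U:U\in\mathscr{B}\}=\{e\}$, which, via (a) together with the conjugation condition (d), under which the left and right neighbourhood structures (and hence the two intersection conditions) coincide, is equivalent to the stated form $\bigcap\{UU^{-1}:U\in\mathscr{B}\}=\{e\}$.

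Finally, condition (f) is precisely continuity of the inversion $\iota$ at $e$ (as $\iota(V)=V^{-1}$ and $\iota(e)=e$), and in a paratopological group continuity of inversion at $e$ propagates to every point $a$ through the identity $\iota=\ell_{a^{-1}}\circ\iota\circ r_{a^{-1}}$, both translations being continuous. Hence (f) holds iff inversion is continuous on all of $G$, that is, iff $G$ is a topological group, which completes the verification of the last two clauses.
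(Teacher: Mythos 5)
The paper offers no proof of this proposition --- it is quoted verbatim from Ravsky's \emph{Paratopological groups I} --- so there is nothing internal to compare against; your argument is the standard Pontrjagin-style construction and it is correct. The only step that deserves a word more than you give it is the Hausdorff clause: your direct computation with the right-translate base $\{Ux\}$ yields $\bigcap\{U^{-1}U:U\in\mathscr{B}\}=\{e\}$ rather than the stated $\bigcap\{UU^{-1}:U\in\mathscr{B}\}=\{e\}$, and the bridge you invoke does work --- by (d) the left translates $\{xU\}$ form a neighbourhood base at $x$ as well, and redoing the disjointness computation with $U$ at $e$ and $gV$ at $g$ gives exactly $g\notin UV^{-1}$, whence (a) reduces it to the stated form --- but it is worth writing out, since the two intersections are not equal for a fixed $U$ and only become equivalent as conditions through (d). One further caveat on the statement itself: as literally phrased (``for a basis $\mathscr{B}_\tau$ of $e$ in the given topology $\tau$''), the implication $(2)\Rightarrow(1)$ is false without the convention, which you correctly adopt, that $\tau$ is the topology \emph{generated} from $\mathscr{B}$ by translations; e.g.\ $\mathscr{B}=\{\{e\}\}$ satisfies (a)--(d) in any topology in which $e$ is isolated.
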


In this paper, we mainly discuss the following questions.

\begin{question}\cite[Open problem 3.3.1]{A2008}\label{q2}
Suppose that $G$ is a Hausdorff (regular) paratopological group in which every point is a $G_{\delta}$-set. Is $G$ submetrizable?
\end{question}

\begin{question}\cite[Open problem 5.7.2]{A2008}\label{q6}
Let $G$ be a regular first-countable $\omega$-narrow paratopological group . Is $G$ submetrizable?
\end{question}

\begin{question}\cite[Problem 20]{AB}\label{q1}
Is every regular first countable (Abelian) paratopological group submetrizable?
\end{question}

\begin{question}\cite[Problem 22]{AB}\label{q7}
Is it true that every regular first countable (Abelian) paratopological group $G$ has a zero-set diagonal\footnote{We say that a space $X$ has a {\it zero-set diagonal} if the diagonal in $X\times X$ is a zero-set
of some continuous real-valued function on $X\times X$.}?
\end{question}

\begin{question}\cite[Problem 21]{AB}\label{q4}
Is every regular first countable (Abelian) paratopological group Dieudonn\'{e} complete?
\end{question}

\begin{question}\cite[Open problem 3.4.3]{A2008}\label{q0}
Let $G$ be a regular $\omega$-narrow first-countable paratopological group. Does there exist a continuous isomorphism of $G$ onto a regular (Hausdorff) second-countable paratopological group?
\end{question}

\begin{question}\label{q9}\cite{LC1}
Is a regular symmetrizable paratopological group metrizable?
\end{question}

\begin{question}\cite[Open problem 5.7.5]{A2008}\label{q3}
Is every paratopological group, which is Moore space, metrizable?
\end{question}

\begin{question}\cite[Open problem 3.6.5]{A2008}\label{q5}
Must the Sorgenfrey line $S$ be closed in every Hausdorff paratopological group containing it as a paratopological subgroup?
\end{question}
\bigskip

We shall give  negative answers to Questions~\ref{q2}, ~\ref{q9}, ~\ref{q5}, and~\ref{q3}, and give a partial answer to Question~\ref{q6}. Moreover, we shall also give affirmative answers to Questions~\ref{q1}, ~\ref{q7}, ~\ref{q0} and ~\ref{q4} when the group $G$ is Abelian.

\section{Preliminaries}
\begin{definition}
Let $\mathscr{P}=\bigcup_{x\in X}\mathscr{P}_{x}$ be a cover of a
space $X$ such that for each $x\in X$, (a) if $U,V\in
\mathscr{P}_{x}$, then $W\subset U\cap V$ for some $W\in
\mathscr{P}_{x}$; (b) the family $\mathscr{P}_{x}$ is a network of $x$ in $X$,
i.e., $x\in\bigcap\mathscr{P}_x$, and if $x\in U$ with $U$ open in
$X$, then $P\subset U$ for some $P\in\mathscr P_x$.

The family $\mathscr{P}$ is called a {\it weak base} for $X$ \cite{Ar} if, for every $A\subset X$, the set $A$ is open in $X$ whenever for each $x\in A$ there exists $P\in
\mathscr{P}_{x}$ such that $P\subset A$.
The space $X$ is {\it weakly first-countable} if $\mathscr{P}_{x}$ is countable for each
$x\in X$.
\end{definition}

\begin{definition}
\begin{enumerate}
\item A space $X$ is called an {\it
$S_{\omega}$}-{space} if $X$ is obtained by identifying all the limit
points from a topological sum of countably many convergent sequences;

\item A space $X$ is called an {\it $S_{2}$}-{space} ({\it Arens' space})  if
$X=\{\infty\}\cup \{x_{n}: n\in \mathbb{N}\}\cup\{x_{n}(m): m, n\in
\mathbb{N}\}$ and the topology is defined as follows: Each
$x_{n}(m)$ is isolated; a basic neighborhood of $x_{n}$ is
$\{x_{n}\}\cup\{x_{n}(m): m>k, \mbox{for some}\ k\in \mathbb{N}\}$;
a basic neighborhood of $\infty$ is $\{\infty\}\cup (\bigcup\{V_{n}:
n>k\ \mbox{for some}\ k\in \mathbb{N}\})$, where $V_{n}$ is a
neighborhood of $x_{n}$.
\end{enumerate}
\end{definition}

\begin{definition}
Let $X$ be a space and $\{\mathscr{P}_{n}\}_{n}$ a sequence of
collections of open subsets of $X$.
\begin{enumerate}
\item $\{\mathscr{P}_{n}\}_{n}$ is called
a {\it quasi-development}~\cite{Be} for $X$ if for every $x\in U$
with $U$ open in $X$, there exists an $n\in \mathbb{N}$ such that
$x\in\mbox{st}(x, \mathscr{P}_{n})\subset U$.

\item $\{\mathscr{P}_{n}\}_{n}$ is called a {\it
development}~\cite{WJ} for $X$ if $\{\mbox{st}(x, \mathscr{P}_{n})\}_{n}$
is a neighborhood base at $x$ in $X$ for each
point $x\in X$.

\item $X$ is called {\it quasi-developable} (resp. {\it developable}), if $X$ has a
quasi-development~(resp. {\it development}).

\item $X$ is called {\it Moore}, if $X$ is regular and developable.
\end{enumerate}
\end{definition}

A subset $B$ of a paratopological group $G$ is called {\it
$\omega$-narrow} in $G$ if, for each neighborhood $U$ of the
neutral element of $G$, there is a countable subset $F$ of $G$ such that
$B\subset FU\cap UF$.

A space $X$ is called a {\it submetrizable} space if it can be mapped onto a
metric space by a continuous one-to-one map. A space $X$ is called a {\it subquasimetrizable} space if it can be mapped onto a
quasimetric space by a one-to-one map.

All spaces are $T_0$ unless stated otherwise. The notations $\mathbb{R, Q, P, N, Z}$ are real numbers, rational numbers, irrational numbers,  natural numbers and integers respectively. The letter $e$
denotes the neutral element of a group. Readers may refer to
\cite{A2008, E1989, Gr} for notations and terminology not
explicitly given here.

\section{Submetrizablity of first-countable paratopological groups}
In this section, we firstly give a negative answer to Question~\ref{q2}, then a answer to Question~\ref{q6}. We also give affirmative answers to Questions~\ref{q1}, ~\ref{q7}, ~\ref{q0} and ~\ref{q4} when $G$ is Abelian.

\begin{proposition}\label{p1}\cite{PN}
The following conditions are equivalent for an arbitrary space $X$.
\begin{enumerate}
\item The space $X$ is submetrizable.

\item The free paratopological group $F_{p}(X)$ is submetrizable.

\item The free Abelian paratopological group $A_{p}(X)$ is submetrizable.
\end{enumerate}
\end{proposition}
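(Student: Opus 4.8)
The plan is to prove the cycle of implications by combining the universal property of the free (Abelian) paratopological group with Graev's classical extension of invariant metrics. Throughout I use the reformulation that a space $Y$ is submetrizable if and only if it carries a \emph{continuous metric}, i.e.\ a metric $\rho$ on the underlying set with $\rho\colon Y\times Y\to\mathbb{R}$ continuous: such a $\rho$ makes the identity a continuous bijection of $Y$ onto the metric space $(Y,\tau_{\rho})$, and conversely a continuous bijection onto a metric space pulls the target metric back to a continuous metric on $Y$.

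The implications $(2)\Rightarrow(1)$ and $(3)\Rightarrow(1)$ form the routine direction. The canonical map $\eta\colon X\to F_{p}(X)$ (resp.\ $X\to A_{p}(X)$) is a continuous injection, so submetrizability is inherited: if $\rho$ is a continuous metric on $F_{p}(X)$ (or $A_{p}(X)$), then $\rho\circ(\eta\times\eta)$ is continuous on $X\times X$ and is a metric because $\eta$ is injective, whence $X$ is submetrizable.

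The substance lies in $(1)\Rightarrow(2)$, the case $(1)\Rightarrow(3)$ being identical with the free Abelian group replacing the free group. Assume $X$ is submetrizable and fix a continuous metric $d$ on $X$. First I extend $d$ by Graev's construction to a two-sided invariant metric $\hat{d}$ on the underlying abstract free group $F_{a}(X)$; Graev's theorem (see, e.g., \cite{A2008}) guarantees that $\hat{d}$ is genuinely a metric, not merely a pseudometric, and that it restricts to $d$ on $X$. Since $\hat{d}$ is two-sided invariant, its metric topology $\tau_{\hat{d}}$ makes left and right translations isometries and turns $F_{a}(X)$ into a topological (in particular paratopological) group. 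On the generating set $X$ the topology $\tau_{\hat{d}}$ restricts to the metric topology $\tau_{d}$, which is coarser than the original topology of $X$ because $d$ is continuous on $X$; hence the inclusion $X\hookrightarrow(F_{a}(X),\tau_{\hat{d}})$ is continuous.

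Now I invoke the universal property of $F_{p}(X)$: this continuous map of $X$ into the paratopological group $(F_{a}(X),\tau_{\hat{d}})$ extends to a continuous homomorphism $F_{p}(X)\to(F_{a}(X),\tau_{\hat{d}})$, which is the identity on the generators and is therefore the identity homomorphism. Consequently $\tau_{\hat{d}}\subseteq\tau_{F_{p}}$, so $\hat{d}$ is a continuous metric on $F_{p}(X)$ and $F_{p}(X)$ is submetrizable. I expect the main obstacle to be the two facts supplied by Graev's theorem—that the extended invariant metric separates all reduced words of the free group and restricts correctly to $d$ on $X$—together with the check that a two-sided invariant metric does yield a paratopological group topology, so that the universal property applies; once these are secured, the comparison $\tau_{\hat{d}}\subseteq\tau_{F_{p}}$ is immediate.
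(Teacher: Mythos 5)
The paper does not prove this proposition; it is quoted verbatim from Pyrch--Ravsky \cite{PN}. Your argument is correct and is essentially the standard one behind that citation: the easy directions follow by pulling a continuous metric back along the continuous injection $X\to F_p(X)$, and the substantive direction follows by extending a continuous metric on $X$ (first to $X\cup X^{-1}\cup\{e\}$, then by Graev's construction) to a two-sided invariant metric on the abstract free (Abelian) group, observing that the resulting metric topological group receives a continuous identity homomorphism from $F_p(X)$ (resp.\ $A_p(X)$) by the universal property, so the Graev metric is continuous in the free paratopological group topology.
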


\begin{proposition}\label{p2}\cite{PN}
The following conditions are equivalent for an arbitrary space $X$.
\begin{enumerate}
\item The space $X$ is subquasimetrizable.

\item The free paratopological group $F_{p}(X)$ is subquasimetrizable.

\item The free Abelian paratopological group $A_{p}(X)$ is subquasimetrizable.
\end{enumerate}
\end{proposition}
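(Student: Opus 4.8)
The plan is to follow the pattern of Proposition~\ref{p1}, replacing metrics by quasimetrics and the Graev metric by an asymmetric analogue. Throughout I read \emph{subquasimetrizable} as: there is a continuous one-to-one map onto a quasimetric space, the topology of the target being the one induced by the quasimetric. The two implications $(2)\Rightarrow(1)$ and $(3)\Rightarrow(1)$ are the easy ones. The canonical map $\sigma\colon X\to F_p(X)$ (resp. $X\to A_p(X)$) is a continuous injection, so if $g\colon F_p(X)\to (M,\rho)$ is a continuous injection onto a quasimetric space, then $g\circ\sigma\colon X\to (M,\rho)$ is again a continuous injection and its image is a subspace of a quasimetric space, hence quasimetric. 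Thus $X$ is subquasimetrizable. Note that this direction does not even require the embedding theorem for $X$ in $F_p(X)$; mere continuity of $\sigma$ suffices.

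The substance is in $(1)\Rightarrow(2)$ and $(1)\Rightarrow(3)$. First I would pull the quasimetric back to $X$: if $f\colon X\to(M,\rho)$ is a continuous injection, set $d(x,y)=\rho(f(x),f(y))$. Then $d$ is a quasimetric on $X$ whose induced topology is coarser than the given topology $\tau_X$. Hence it suffices to assume from the outset that $X$ carries a quasimetric $d$ whose topology is coarser than $\tau_X$, and to produce a paratopological-group quasimetric on the free group $F(X)$ (resp. on the free abelian group $A(X)$) that is coarser than the free paratopological group topology.

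For this I would build a Graev-type \emph{quasi-norm} $N$ on $F(X)$ extending $d$: for a word $w$, $N(w)$ is the infimum, over all ways of contracting $w$ to $e$ by pairing and cancelling letters, of the sum of the quasi-distances used, with the crucial feature that, since $d$ need not be symmetric, the pairing respects the orientation $d(x,y)$ versus $d(y,x)$ and one does \emph{not} impose $N(w)=N(w^{-1})$. Making $N$ conjugation-invariant, i.e. $N(awa^{-1})=N(w)$, the formula $\hat d(u,v)=N(u^{-1}v)$ defines a two-sided invariant quasimetric on $F(X)$. A two-sided invariant quasimetric induces a paratopological group topology: left invariance gives $\hat d(xy,xy')=\hat d(y,y')$, right invariance gives $\hat d(xy',x'y')=\hat d(x,x')$, and the quasi-triangle inequality yields $\hat d(xy,x'y')\le \hat d(x,x')+\hat d(y,y')$, so multiplication is jointly continuous. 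Since $\hat d$ restricts to $d$ on $X$, the canonical map $X\to (F(X),\hat d)$ is continuous, and by the universal property of $F_p(X)$ the identity $F_p(X)\to(F(X),\hat d)$ is a continuous bijection onto a quasimetric space; this gives $(1)\Rightarrow(2)$. The abelian case $(1)\Rightarrow(3)$ is identical using the abelianized quasi-norm on $A(X)$, where the conjugation issues disappear.

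The main obstacle I anticipate is exactly the point where Graev's theorem is already delicate in the symmetric case: showing that $N$ is a genuine quasi-norm, in particular that $N(w)>0$ for every reduced word $w\neq e$ so that $\hat d$ is non-degenerate, and that $N$ extends $d$ on $X$ without collapse, i.e. $N(x^{-1}y)=d(x,y)$ rather than something strictly smaller. In the asymmetric setting one must verify that the combinatorial reduction and pairing estimates controlling the infimum still go through when $d(x,y)\ne d(y,x)$; this is where the argument genuinely differs from the metric case and where I would concentrate the effort.
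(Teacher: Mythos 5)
The paper itself offers no proof of Proposition~\ref{p2}: like Proposition~\ref{p1}, it is quoted from \cite{PN}, so there is no internal argument to measure you against, only the construction in the cited source. Your overall strategy is the right one and is essentially the one used there: $(2)\Rightarrow(1)$ and $(3)\Rightarrow(1)$ by composing a condensation of $F_p(X)$ (resp.\ $A_p(X)$) with the canonical injection $X\to F_p(X)$ (injectivity being harmless since all spaces in this paper are $T_0$), and $(1)\Rightarrow(2),(3)$ by extending a point-separating quasi-(pseudo)metric on $X$ to an invariant Graev-type quasi-pseudometric on the abstract free group, observing that an invariant quasi-pseudometric induces a paratopological group topology, and invoking the universal property of $F_p(X)$ to obtain a continuous isomorphism onto the resulting quasimetrizable group.

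The difficulty is that essentially all of the technical content of the statement sits inside the lemma you defer, so as written this is a plan rather than a proof. Two concrete points are left open. First, to define the quasi-norm $N(w)$ for an arbitrary word $w$ you must first extend $d$ to the alphabet $X\cup X^{-1}\cup\{e\}$, i.e.\ prescribe the quasi-distances between letters of opposite sign and to $e$; in the asymmetric setting the choice (e.g.\ $d^*(x^{-1},y^{-1})=d(y,x)$ with mixed distances made large) is precisely what makes the cancellation estimates work, and your sketch does not pin it down. Second, positivity: under the usual convention a quasimetric need only satisfy $d(x,y)=d(y,x)=0\Rightarrow x=y$, so one cannot expect $N(w)>0$ for every reduced $w\neq e$ --- already $N(x^{-1}y)=d(x,y)$ may vanish for $x\neq y$. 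What has to be verified is the weaker separation property that $\hat d(u,v)$ and $\hat d(v,u)$ do not both vanish for $u\neq v$, together with the non-collapse $N(x^{-1}y)=d(x,y)$; this is Graev's combinatorial argument redone without symmetry, and it is exactly the step you acknowledge but do not carry out. Until that lemma is proved, the implication $(1)\Rightarrow(2)$ (and likewise $(1)\Rightarrow(3)$) is not established.
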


\begin{example}
There exist a Hausdorff paratopological group $G$ in which every point is a $G_{\delta}$-set, and $G$ is not submetrizable.
\end{example}

\begin{proof}
Let $X$ be the lexicographically ordered set $X=(\mathbb{R}\times\{0\})\cup (\mathbb{P}\times\mathbb{Z})$. Then $X$ is a non-metrizable linearly ordered topological space without $G_{\delta}$-diagonal (\cite[Example 2.4]{BH}), hence $X$ is not submetrizable. However, $X$ is quasi-developable \cite{LD}. It is well known that quasi-developability in a generalized ordered space
is equivalent to the existence of a $\sigma$-disjoint base, or of a $\sigma$-point-finite base \cite[Theorem 4.2]{BL2002}. Hence $X$ has $\sigma$-point finite base. Therefore, $X$ is quasi-metrizable since a space with a $\sigma$-point finite base is quasi-metrizable \cite[Page 489]{Gr}. Let $G$ be the free Abelian paratopological group $A_{p}(X)$ over $X$. Since a totally order space endowed the order topology is Tychonoff, then $X$ is Tychonoff. It follows from \cite[Proposition 3.8]{PN} that $G$ is Hausdorff. By Propositions~\ref{p1} and ~\ref{p2}, $G$ is subquasimetrizable and non-submetrizable. Since $G$ is subquasimetrizable, every singleton of $G$ is a $G_{\delta}$-set.
\end{proof}

Next we partially answer Question~\ref{q6}.

The {\it weak extent} \cite{BD} of a space $X$, denoted by
$\omega e(X)$, is the least cardinal number $\kappa$ such that for every open cover $\mathscr{U}$ of $X$ there
is a subset $A$ of $X$ of cardinality no greater than $\kappa$ such that $\mbox{st}(A; \mathscr{U})=X$, where $\mbox{st}(A; \mathscr{U})=\bigcup\{U: U\in\mathscr{U}, U\cap A\neq\emptyset\}$. If $X$ is separable, then $we(X)=\omega$.

\begin{theorem}\cite{BD}\label{l4}
If $X^{2}$ has countable weak extent and a regular $G_{\delta}$-diagonal\footnote{A space $X$ is said to have a {\it regular $G_{\delta}$-diagonal} if the diagonal $\Delta=\{(x, x): x\in X\}$ can be represented as the intersection of the closures of a countable family
of open neighborhoods of $\Delta$ in $X \times X$.}, then
$X$ condenses onto a second countable Hausdorff space.
\end{theorem}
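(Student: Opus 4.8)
The plan is to exhibit a coarser second-countable Hausdorff topology on $X$; since the identity map from $X$ onto $X$ equipped with that coarser topology is then a continuous bijection, this is precisely a condensation onto a second-countable Hausdorff space. First I note that a regular $G_{\delta}$-diagonal forces $\Delta=\{(x,x):x\in X\}$ to be an intersection of closed sets, hence closed, so $X$ is Hausdorff and $X^{2}\setminus\Delta$ is open. The key reduction is the following reformulation: it suffices to find a \emph{countable} family $\{B_{k}\times B_{k}':k\in\mathbb{N}\}$ of open rectangles in $X\times X$ with $B_{k}\cap B_{k}'=\emptyset$ whose union is $X^{2}\setminus\Delta$. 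Indeed, if $\mathscr{C}=\{B_{k}:k\}\cup\{B_{k}':k\}$ and $\tau^{\ast}$ is the topology on $X$ generated by $\mathscr{C}$ as a subbase, then $\tau^{\ast}$ is second countable and coarser than the topology of $X$; and for distinct $x,y$ the point $(x,y)$ lies in some $B_{k}\times B_{k}'$, so $x\in B_{k}$, $y\in B_{k}'$ and $B_{k}\cap B_{k}'=\emptyset$, which makes $\tau^{\ast}$ Hausdorff.

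Next I would use the regular $G_{\delta}$-diagonal to write $\Delta=\bigcap_{n}\overline{U_{n}}$ with each $U_{n}$ open in $X^{2}$ and, after replacing $U_{n}$ by the intersection of the first $n$ of them, $U_{n+1}\subseteq U_{n}$. For each $n$ the set $X^{2}\setminus\overline{U_{n}}$ is open, and since $\Delta\subseteq\overline{U_{n}}$ every basic rectangle $B\times B'\subseteq X^{2}\setminus\overline{U_{n}}$ automatically satisfies $B\cap B'=\emptyset$ (otherwise a diagonal point would lie in $B\times B'$). Thus the family $\mathscr{R}_{n}$ of all such diagonal-missing rectangles covers $X^{2}\setminus\overline{U_{n}}$, and $\bigcup_{n}(X^{2}\setminus\overline{U_{n}})=X^{2}\setminus\bigcap_{n}\overline{U_{n}}=X^{2}\setminus\Delta$. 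The role of the hypothesis is now to thin each $\mathscr{R}_{n}$ to a countable subfamily without losing coverage: I would enlarge $\mathscr{R}_{n}$ to an open cover $\mathscr{S}_{n}$ of the whole square by adjoining an open neighbourhood of $\overline{U_{n}}$, apply the countable weak extent of $X^{2}$ to $\mathscr{S}_{n}$ to obtain a countable $A_{n}\subseteq X^{2}$ with $\mbox{st}(A_{n},\mathscr{S}_{n})=X^{2}$, and retain the countable subfamily $\mathscr{R}_{n}'=\{R\in\mathscr{R}_{n}:R\cap A_{n}\neq\emptyset\}$. Forming $\bigcup_{n}\mathscr{R}_{n}'$ and reading off the sides of these rectangles would produce the desired countable family $\mathscr{C}$, and the reformulation above finishes the argument.

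The main obstacle is this thinning step, and it is exactly here that weak extent of the \emph{square} (rather than of $X$) is indispensable, since I must guarantee that $\bigcup_{n}\mathscr{R}_{n}'$ still covers all of $X^{2}\setminus\Delta$. The danger is the near-diagonal set adjoined to form $\mathscr{S}_{n}$: an off-diagonal pair $(x,y)$ could be starred into $X^{2}$ only through that neighbourhood of $\overline{U_{n}}$ rather than through a genuine rectangle, because the boundary $\overline{U_{n}}\setminus U_{n}$ cannot be covered by rectangles lying outside $\overline{U_{n}}$. To control this I would exploit the regularity of the $G_{\delta}$-diagonal, i.e.\ the closures $\overline{U_{n}}$, together with the nesting $U_{n+1}\subseteq U_{n}$: for a fixed pair $(x,y)\notin\Delta$ one has $(x,y)\notin\overline{U_{n}}$ for all large $n$, and if the adjoined neighbourhoods are chosen so as to contract to $\Delta$, then $(x,y)$ eventually falls outside the adjoined set, forcing it to be covered at that level by an honest rectangle of $\mathscr{R}_{n}'$. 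Arranging these adjoined neighbourhoods to contract to $\Delta$ while still covering each closed set $\overline{U_{n}}$ is the delicate point, and it is precisely where the \emph{regular} (as opposed to ordinary) $G_{\delta}$-diagonal hypothesis must be made to do its work.
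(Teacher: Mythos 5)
The first thing to note is that the paper contains no proof of this statement: it is quoted from the reference [BD] (Basile, Bella and Ridderbos) and used as a black box, so your argument has to stand on its own rather than be compared with an internal proof.

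As written it does not stand. Your reduction (a countable family of open rectangles with disjoint sides covering $X^{2}\setminus\Delta$ yields a coarser second countable Hausdorff topology) is correct, but the construction of that countable family is broken in two places. The fatal one is the ``thinning'' step: the family $\mathscr{R}_{n}'=\{R\in\mathscr{R}_{n}:R\cap A_{n}\neq\emptyset\}$ is in general uncountable, since a single point of the countable set $A_{n}$ typically lies in uncountably many of the diagonal-missing rectangles that make up $\mathscr{R}_{n}$. Countable weak extent does not permit extraction of countable subcovers --- that would be a Lindel\"of-type hypothesis --- and the only canonical countable family of open sets it yields is the family of stars $\{\mbox{st}(a,\mathscr{S}_{n}):a\in A_{n}\}$; but a star is a union of rectangles, not a rectangle, and distinct stars need not have disjoint sides. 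If you instead choose one rectangle $R_{a}\ni a$ for each $a\in A_{n}$ to restore countability, the star condition no longer gives coverage: a point $(x,y)$ is only guaranteed to lie in \emph{some} member of $\mathscr{S}_{n}$ meeting $A_{n}$, not in one of the chosen $R_{a}$. So the hypothesis is being used as if it were separability of $X^{2}$, which it is not. The second gap is the one you flag yourself: to make $\mathscr{S}_{n}$ a cover of $X^{2}$ you must adjoin an open $V_{n}\supseteq\overline{U_{n}}$, and your argument needs $\bigcap_{n}V_{n}=\Delta$; since a regular $G_{\delta}$-diagonal gives no normal nesting (there is no reason to have $\overline{U_{n+1}}\subseteq U_{n}$, nor open sets squeezed between $\overline{U_{n}}$ and $\Delta$ with intersection $\Delta$), no construction of such $V_{n}$ is available and you do not supply one. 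Overcoming both obstructions requires a different mechanism --- for instance Zenor's open-cover characterization of regular $G_{\delta}$-diagonals combined with point-stars over the countable sets $A_{n}$ --- which is presumably what [BD] actually does.
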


\begin{theorem}\label{l7}\cite{LP}
Each $\omega$-narrow first-countable paratopological group is separable.
\end{theorem}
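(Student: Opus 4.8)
The plan is to exhibit an explicit countable dense subset of $G$. First I would fix a countable decreasing base $\{U_n : n\in\mathbb{N}\}$ of open neighborhoods of the neutral element $e$, which exists because $G$ is first-countable (replace a given countable base by finite intersections). Since multiplication is jointly continuous, every left translation $L_g\colon x\mapsto gx$ is a homeomorphism with inverse $L_{g^{-1}}$, so $\{gU_n : n\in\mathbb{N}\}$ is a neighborhood base at each point $g$. Applying $\omega$-narrowness of $G$ (taking $B=G$) to each $U_n$, I would choose a countable set $F_n\subseteq G$ with $G=U_nF_n$ (the definition actually yields $G\subseteq F_nU_n\cap U_nF_n$, so either factorization is at hand). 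The set $D:=\bigcup_n F_n^{-1}$ is then countable, and the claim to be verified is that $D$ is dense.

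The density check would go as follows. Fix $g\in G$ and a basic neighborhood $gU_n$ of $g$; I must produce a point of $D$ inside it. Taking inverses in $G=U_nF_n$ gives $G=(U_nF_n)^{-1}=F_n^{-1}U_n^{-1}$, so there exist $a\in F_n^{-1}$ and $b\in U_n^{-1}$ with $g=ab$. Then $a^{-1}g=b\in U_n^{-1}$, hence $g^{-1}a=b^{-1}\in U_n$, i.e. $a\in gU_n$. Since $a\in F_n^{-1}\subseteq D$, this shows $D\cap gU_n\neq\emptyset$. As $\{gU_n\}_n$ is a base at $g$ and $g$ was arbitrary, $\overline{D}=G$, so $G$ is separable.

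The conceptual point, and the step I expect to be the genuine obstacle, is the passage to the inverses $F_n^{-1}$. A naive imitation of the topological-group argument would take $D=\bigcup_n F_n$ and try to place a witness inside $gU_n$, but the factorization $g=uf$ forces the error term $u$ to appear on the wrong side; because inversion need not be continuous and the $U_n$ need not be symmetric, one cannot absorb it. Equivalently, the sets $\{U_n^{-1}\}$ are a neighborhood base for the \emph{reverse} paratopological topology $\tau^{-1}$ rather than for $\tau$, and $\omega$-narrowness most naturally delivers convergence in $\tau^{-1}$; the Sorgenfrey line is the guiding example, where $\tau^{-1}$ is the upper-limit topology and the common coarsening of $\tau$ and $\tau^{-1}$ is the usual topology. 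Taking inverses realigns the factorization with the actual $\tau$-neighborhoods $gU_n$, which is exactly what the computation above exploits. I would finally note that $G$ being merely $T_0$ causes no difficulty, since separability uses no separation axiom beyond what is assumed.
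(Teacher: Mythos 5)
Your proof is correct: from $G=U_nF_n$ you pass to $G=F_n^{-1}U_n^{-1}$, so the factorization $g=ab$ with $b^{-1}\in U_n$ puts the witness $a\in F_n^{-1}$ inside $gU_n$, and $D=\bigcup_nF_n^{-1}$ is a countable dense set; your remark that the naive choice $\bigcup_nF_n$ fails because the error term lands on the wrong side is exactly the right diagnosis. The paper itself gives no proof of this statement (it is quoted from Li--Mou--Wang \cite{LP}), but your argument is the standard one for this result and is complete as written.
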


\begin{theorem}
If $G$ is a regular $\omega$-narrow first-countable paratopological group, then $G$ condenses onto a second countable Hausdorff space..
\end{theorem}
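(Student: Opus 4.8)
The plan is to deduce the statement from Theorem~\ref{l4} by verifying its two hypotheses for $X=G$: that $G^2$ has countable weak extent, and that $G$ has a regular $G_\delta$-diagonal.

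First I would dispose of the weak extent. Since $G$ is $\omega$-narrow and first-countable, Theorem~\ref{l7} gives that $G$ is separable. A product of two separable spaces is separable, so $G^2$ is separable, and by the remark following the definition of weak extent a separable space has weak extent $\omega$; hence $we(G^2)=\omega$. This part is routine.

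The substantive step is to produce a regular $G_\delta$-diagonal for $G$. Fix a decreasing countable base $\{U_n\}$ of open neighbourhoods of $e$; using the Pontrjagin conditions of Proposition~\ref{p0} together with regularity I would arrange $U_{n+1}^2\subset U_n$ and $\overline{U_{n+1}}\subset U_n$. Put $B_n=U_nU_n^{-1}$; each $B_n$ is an open symmetric neighbourhood of $e$, and the sets $O_n=\bigcup_{a\in G}(U_na\times U_na)=\{(x,y):xy^{-1}\in B_n\}$ are open neighbourhoods of the diagonal $\Delta$ (here I use that right translations are homeomorphisms of a paratopological group). Since $G$ is regular and $T_0$, it is Hausdorff, so condition~(e) of Proposition~\ref{p0} gives $\bigcap_n B_n=\{e\}$, whence $\bigcap_n O_n=\Delta$; this already yields a $G_\delta$-diagonal. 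To upgrade it to a \emph{regular} $G_\delta$-diagonal I would prove the closure lemma $\bigcap_n\overline{B_n}=\{e\}$ and transfer it, through the standard open-cover description of a regular $G_\delta$-diagonal applied to the covers $\mathscr{G}_n=\{U_na:a\in G\}$ (whose star at $x$ is exactly $B_nx$), into $\bigcap_n\overline{O_n}=\Delta$.

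The closure lemma is where the paratopological, as opposed to topological, nature of $G$ bites, and I expect it to be the main obstacle: inversion need not be continuous, so one cannot cancel a factor tending to $e$ by passing to its inverse, and the naive computation of $\overline{O_n}$ produces two-sided expressions $U_k^{-1}B_nU_k$ that resist control. The device I would use is to multiply only by small elements in the continuous direction. Concretely, if $z\in\bigcap_n\overline{B_n}$ then for each $n$ the neighbourhood $U_nz$ of $z$ meets $B_n$, so there are $s_n,p_n,q_n\in U_n$ with $s_nz=p_nq_n^{-1}$, that is $s_nzq_n=p_n$. As $n\to\infty$ we have $s_n,q_n,p_n\to e$; since multiplication is jointly continuous, $s_nzq_n\to z$, while $p_n\to e$, and Hausdorffness forces $z=e$. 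The point is that this argument uses only left and right multiplication by the small elements $s_n,q_n$ and never their inverses, so the discontinuity of inversion is sidestepped. With the regular $G_\delta$-diagonal of $G$ and the countable weak extent of $G^2$ in hand, I would finally invoke Theorem~\ref{l4} to conclude that $G$ condenses onto a second-countable Hausdorff space.
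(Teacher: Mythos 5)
Your overall architecture matches the paper's: verify the two hypotheses of Theorem~\ref{l4} for $X=G$. The weak-extent half is fine (the paper instead notes that $G^{2}$ is itself an $\omega$-narrow first-countable paratopological group and applies Theorem~\ref{l7} to $G^{2}$; your route via separability of $G$ and of finite products of separable spaces is equally valid). For the second hypothesis the paper simply cites \cite{LC} for the fact that a regular first-countable paratopological group has a regular $G_{\delta}$-diagonal; you attempt to prove this from scratch, and that is where the gap is.

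Your closure lemma $\bigcap_{n}\overline{B_{n}}=\{e\}$ is correct and the cancellation trick $s_{n}zq_{n}=p_{n}$ is sound, but it does not deliver $\bigcap_{n}\overline{O_{n}}=\Delta$. What it delivers is $\bigcap_{n}\overline{\mathrm{st}(x,\mathscr{G}_{n})}=\bigl(\bigcap_{n}\overline{B_{n}}\bigr)x=\{x\}$, i.e. a $G_{\delta}^{*}$-diagonal, which is strictly weaker than a regular $G_{\delta}$-diagonal and is not what Theorem~\ref{l4} requires. Concretely, $(x,y)\in\overline{O_{n}}$ iff every box $U_{k}x\times U_{k}y$ meets $O_{n}$, i.e. iff $xy^{-1}\in U_{k}^{-1}B_{n}U_{k}$ for all $k$; so $\bigcap_{n}\overline{O_{n}}=\Delta$ amounts to $\bigcap_{n,k}U_{k}^{-1}U_{n}U_{n}^{-1}U_{k}=\{e\}$, with an extra factor $U_{k}$ on the right produced by perturbing the second coordinate ($y\mapsto ty$ gives $(ty)^{-1}=y^{-1}t^{-1}$). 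Your device clears the $q^{-1}$ internal to $B_{n}$ by right multiplication, but it cannot clear the $t^{-1}$ sandwiched between $z$ and $B_{n}$: from $szt^{-1}=pq^{-1}$ one only gets $szt^{-1}q=p$, and the $t^{-1}$ survives. Equivalently, in Zenor's characterization one must separate $x$ and $y$ by open sets rather than by points, so the relevant set is $\mathrm{st}(U_{k}x,\mathscr{G}_{n})=U_{n}U_{n}^{-1}U_{k}x$ and not $\mathrm{st}(x,\mathscr{G}_{n})=B_{n}x$; your lemma controls the closure of the latter only. This two-sided control is exactly the difficulty the paper avoids by importing the regular $G_{\delta}$-diagonal from \cite{LC}, so your proof is incomplete until that step is supplied.
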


\begin{proof}
 It is straightforward to prove that the product of two $\omega$-narrow paratopological groups is an $\omega$-narrow paratopological group. Then $G^{2}$ is an $\omega$-narrow first-countable paratopological group, and hence $G^{2}$ is separable by Theorem~\ref{l7}. Then $G^{2}$ has countable weak extent. Moreover, it follows from \cite{LC} that $G$ has a regular $G_{\delta}$-diagonal. Therefore, $G$ condenses onto a second countable Hausdorff space by Theorem~\ref{l4}.
\end{proof}

\begin{corollary}
Let $(G, \tau)$ be a regular $\omega$-narrow first-countable paratopological group. There exists a continuous isomorphism of $G$ onto a Hausdorff second-countable space.
\end{corollary}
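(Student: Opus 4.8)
The plan is to read the corollary off directly from the theorem just proved. That theorem produces, for any regular $\omega$-narrow first-countable paratopological group $G$, a condensation of $G$ onto a second-countable Hausdorff space; since a condensation is by definition a continuous one-to-one map onto its image, this condensation is precisely a continuous isomorphism (in the sense of a continuous bijection) of $G$ onto a Hausdorff second-countable space. Thus the corollary is the theorem restated, and no new argument is needed for the literal statement.

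If instead one wants the stronger conclusion implicit in Question~\ref{q0} --- that the target carries a compatible group structure and the map is a group homomorphism --- I would proceed by a diagonal-product construction. First I would invoke the $\omega$-narrowness of $G$ to obtain, for each neighborhood $U$ of $e$, a continuous homomorphism $f_U\colon G\to H_U$ onto a second-countable paratopological group $H_U$ whose kernel is contained in $U$ (the paratopological analogue of Guran's theorem). Next, using first-countability, I would fix a countable base $\{U_n\}_{n\in\mathbb{N}}$ at $e$ and form the diagonal homomorphism $f\colon G\to\prod_{n}H_{U_n}$ given by $f(x)=(f_{U_n}(x))_{n}$.

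The target $\prod_{n}H_{U_n}$ is a paratopological group, and being a countable product of second-countable spaces it is second-countable; hence so is the subgroup $f(G)$. Injectivity of $f$ follows because $G$ is Hausdorff (being regular and $T_0$): with $\{U_n\}$ a base at $e$ one has $\bigcap_n U_n=\{e\}$, whence $\ker f=\bigcap_n\ker f_{U_n}\subset\bigcap_n U_n=\{e\}$. Therefore $f$ is a continuous isomorphism of $G$ onto the second-countable Hausdorff paratopological group $f(G)$.

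The main obstacle is the first step of this upgrade, namely securing the Guran-type homomorphisms $f_U$ with small kernels onto second-countable paratopological groups. In the topological-group setting this is classical, but for paratopological groups the absence of a continuous inverse makes the construction delicate, and it appears to require the Abelian hypothesis --- which is precisely why the full group-theoretic answer to Question~\ref{q0} is stated only in the Abelian case, while the bare condensation onto a second-countable Hausdorff space holds in general.
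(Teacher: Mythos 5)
Your first paragraph is exactly the paper's (implicit) argument: the corollary is stated without proof as an immediate restatement of the preceding theorem, since a condensation onto a second-countable Hausdorff space is by definition a continuous bijection, and one can transfer the group structure along it to read this as a continuous isomorphism. The additional discussion of a Guran-type diagonal construction is not needed for the literal statement and is not what the paper does here.
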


A paratopological group $G$ with a base at the neutral element $\mathscr{B}$ is a {\it SIN}-{\it group} (Small Invariant Neighborhoods), if for each $U\in \mathscr{B}$ there exists a $V\in\mathscr{B}$ such that $xVx^{-1}\subset U$ for each $x\in G.$

\begin{theorem}\label{t0}
If $(G, \tau)$ is a Hausdorff SIN first-countable paratopological group, then $G$ is submetrizable.
\end{theorem}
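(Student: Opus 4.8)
The plan is to produce a coarser \emph{topological group} topology $\tau'$ on $G$ and then invoke the Birkhoff--Kakutani theorem. Since a first-countable Hausdorff topological group is metrizable, and since the identity map $(G,\tau)\to(G,\tau')$ is a continuous isomorphism whenever $\tau'\subseteq\tau$, exhibiting such a $\tau'$ immediately yields submetrizability. So the whole problem reduces to building a countable family $\{S_n\}$ of symmetric $\tau$-open neighborhoods of $e$ with $\bigcap_n S_n=\{e\}$ that satisfies the Pontrjagin conditions for a \emph{topological} group: filter base, a square-root condition $S_{n+2}^2\subset S_n$, symmetry $S_n^{-1}=S_n$, and uniform conjugation invariance.

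First I would fix a countable base $\{U_n\}$ at $e$ and, using Pontrjagin condition (b) together with the SIN condition, recursively extract open neighborhoods $V_0\supset V_1\supset\cdots$ that still form a base at $e$ (intersect each with $U_{n+1}$) and satisfy both $V_{n+1}^2\subset V_n$ and $xV_{n+1}x^{-1}\subset V_n$ for \emph{every} $x\in G$; the uniformity in $x$ is exactly what SIN buys over the generic paratopological condition (d). I would then set $S_n=V_nV_n^{-1}$, which is open (a union of right translates of $V_n$), symmetric, and a neighborhood of $e$, with $\bigcap_nS_n=\{e\}$ by the Hausdorff condition (e). Conjugation invariance is then immediate, since $xS_{n+1}x^{-1}=(xV_{n+1}x^{-1})(xV_{n+1}x^{-1})^{-1}\subset V_nV_n^{-1}=S_n$.

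The heart of the argument---and the step I expect to be the main obstacle---is the square-root condition $S_{n}^2\subset S_{n-2}$, i.e.
\[
V_nV_n^{-1}V_nV_n^{-1}\subset V_{n-2}V_{n-2}^{-1}.
\]
The difficulty is the interior block $V_n^{-1}V_n$, whose inverses cannot be absorbed by the one-sided continuity available in a paratopological group. Here SIN is decisive: for a typical element $ab^{-1}cd^{-1}$ with $a,b,c,d\in V_n$, I would rewrite $b^{-1}c=(b^{-1}cb)\,b^{-1}$ and use $b^{-1}V_nb\subset V_{n-1}$ to place $b^{-1}cb\in V_{n-1}$, so that $ab^{-1}cd^{-1}\in aV_{n-1}(db)^{-1}$; then $aV_{n-1}\subset V_{n-1}^2\subset V_{n-2}$ and $(db)^{-1}\in(V_n^2)^{-1}\subset V_{n-2}^{-1}$, giving the claim. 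This conjugation trick is the only place the full strength of SIN is used.

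With $\{S_n\}$ verified to satisfy all the topological-group neighborhood axioms, it defines a Hausdorff group topology $\tau'$; since each $S_n$ is $\tau$-open and left translations are $\tau$-homeomorphisms, $\tau'\subseteq\tau$. As $(G,\tau')$ is a first-countable Hausdorff topological group, Birkhoff--Kakutani makes it metrizable, and the continuous isomorphism $(G,\tau)\to(G,\tau')$ witnesses that $G$ is submetrizable. I would double-check only the routine points: that the recursive construction preserves the base property, and that symmetry of $S_n$ supplies the inverse axiom $S_n^{-1}\subset S_n$ for free.
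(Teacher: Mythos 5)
Your proposal is correct and follows essentially the same route as the paper: both build the coarser topology from the symmetric sets $V_nV_n^{-1}$ (the paper uses $U_nU_n^{-1}$ directly), and both hinge on the same SIN conjugation trick $b^{-1}c=(b^{-1}cb)b^{-1}$ (equivalently $U_m^{-1}U_m\subset U_kU_m^{-1}$) to verify the square-root condition, before invoking Birkhoff--Kakutani. The differences are only organizational (you pre-normalize a nested sequence $V_n$; the paper picks indices ad hoc and checks the Pontrjagin axioms plus condition (e) for Hausdorffness).
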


\begin{proof}
Let $\{U_{n}: n\in\mathbb{N}\}$ be a countable local base of $(G, \tau)$ at the neutral element $e$, where $U_{n+1}\subset U_{n}$ for each $n\in \mathbb{N}$.

For $x\in G$, let $\mathcal{B}_x=\{xU_nU_n^{-1}: n\in \mathbb{N}\}$. Then $\{\mathcal{B}_x\}_{x\in G}$ has the following properties.

(BP1) For every $x\in G$, $\mathcal{B}_x\neq \emptyset$ and for every $U\in \mathcal{B}_x, x\in U$.

(BP2) If $x\in U\in \mathcal{B}_y$, then there exists a $V\in \mathcal{B}_x$ such that $V\subset U$

In fact, if $x\in U=yU_iU_i^{-1}\in \mathcal {B}_y$, then $x=yu_1u_2^{-1}$ for some $u_1, u_2 \in U_i$. Pick $U_j, U_k\in \{U_{n}: n\in\mathbb{N}\}$ such that $U_k\subset U_i$, $u_1U_k\subset U_i, U_ju_2\subset U_k, u_2^{-1}U_ju_2\subset U_k$. Then $xU_jU_j^{-1}=yu_1u_2^{-1}U_jU_j^{-1}\subset yu_1U_ku_2^{-1}U_j^{-1}\subset yU_i(U_ju_2)^{-1}\subset yU_iU_k^{-1}\subset yU_iU_i^{-1}=U$.

(BP3) For any $V_1, V_2 \in \mathcal{B}_x$ there exists a $V\in \mathcal {B}_x$ such that $V\subset V_1 \cap V_2$.

Let $\tau^{\ast}$ be the topology generated by the neighborhood system $\{\mathcal{B}_x\}_{x\in G}$. Obviously, the topology of $(G, \tau^{\ast})$ is coarser than $(G, \tau)$ and it is first-countable. We prove that $(G, \tau^{\ast})$ is a Hausdorff topological group.

It is easy to see that (a), (d) and (f) in Proposition~\ref{p0} are satisfied. (BP2) implies (c). We check conditions (b) and (d).

 Fix $n\in \mathbb{N}$. Then there is  $k>n$ such that $U_{k}^{2}\subset U_{n}$ since $(G, \tau)$ is a paratopological group. $G$ is a SIN-group, there exists an $m>k$ such that $xU_{m}x^{-1}\subset U_{k}$ for each $x\in G$, and hence $U_{m}^{-1}U_{m}\subset U_{k}U_{m}^{-1}$.  Then $U_{m}U_{m}^{-1}U_{m}U_{m}^{-1}\subset U_{m}U_{k}U_{m}^{-1}U_{k}^{-1}\subset U_{k}U_{k}U_{k}^{-1}U_{k}^{-1}\subset U_{n}U_{n}^{-1}$. Hence (b) is satisfied.

Since $\{U_n: n\in \mathbb{N}\}$ is a network of $G$ at $e$, $\bigcap_{n\in\mathbb{N}} U_nU_{n}^{-1}=\{e\}$ by \cite[Proposition 3.4, 3.5]{A2002}.
 Therefore, $(G, \tau^{\ast})$ is Hausdorff, and hence it is Tychonoff.

Since first-countable topological group metrizable, we have $(G, \tau^{\ast})$ is metrizable. Therefore, $G$ is submetrizable.
\end{proof}

It is well known that all submetrizable spaces have
a zero-set diagonal. Therefore, Theorem~\ref{t0} gives a partial answer to Question~\ref{q7}.

\begin{corollary}\label{c5}
If $(G, \tau)$ is a Hausdorff Abelian first-countable paratopological group, then $G$ is submetrizable.
\end{corollary}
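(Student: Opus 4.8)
The plan is to deduce this immediately from Theorem~\ref{t0} by observing that every Abelian paratopological group is automatically a SIN-group, so that the Abelian hypothesis is simply a special case of the SIN hypothesis. Concretely, since $G$ is first-countable I would fix a countable base $\mathscr{B}$ at the neutral element $e$, and then verify that $(G,\tau)$ is a SIN-group with respect to this base.

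The one thing to check is the SIN condition: for each $U\in\mathscr{B}$ I must find $V\in\mathscr{B}$ with $xVx^{-1}\subset U$ for every $x\in G$. Here commutativity trivializes conjugation, since in an Abelian group $xvx^{-1}=v$ for all $v\in G$, whence $xUx^{-1}=U$ for every $x\in G$ and every subset $U\subset G$. Thus taking $V=U$ witnesses the requirement, and $G$ is a SIN-group. Consequently $(G,\tau)$ is a Hausdorff SIN first-countable paratopological group, and Theorem~\ref{t0} applies directly to yield that $G$ is submetrizable.

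I expect no real obstacle in this argument: the entire content is the identity $xUx^{-1}=U$ coming from commutativity, after which Theorem~\ref{t0} does all the work. In particular, there is no need to re-verify the Pontrjagin conditions of Proposition~\ref{p0} or to reconstruct the coarser metrizable topology $\tau^{\ast}$, since those steps are already carried out inside the proof of Theorem~\ref{t0}. The only point worth stating explicitly for the reader is the reduction \emph{Abelian} $\Rightarrow$ \emph{SIN}, which makes the corollary a genuine one-line consequence of the theorem.
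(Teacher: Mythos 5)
Your proposal is correct and matches the paper's intent exactly: the corollary is stated immediately after Theorem~\ref{t0} precisely because an Abelian paratopological group satisfies the SIN condition trivially via $xVx^{-1}=V$. The reduction \emph{Abelian} $\Rightarrow$ \emph{SIN} followed by an application of Theorem~\ref{t0} is the intended one-line argument.
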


Indeed, we have the following more stronger result.

\begin{theorem}\label{t9}
If $(G, \tau)$ is a Hausdorff Abelian paratopological group with a countable $\pi$-character, then $G$ is submetrizable.
\end{theorem}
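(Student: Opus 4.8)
The plan is to adapt the construction of a coarser metrizable topology used in the proofs of Theorem~\ref{t0} and Corollary~\ref{c5}, replacing the countable local base by the countable $\pi$-base that the hypothesis now provides. By homogeneity of paratopological groups it suffices to work at the neutral element $e$, so fix a countable $\pi$-base $\{V_n : n\in\mathbb{N}\}$ of nonempty open sets at $e$. For each $n$ set $N_n = V_nV_n^{-1}$; each $N_n$ is an open symmetric neighborhood of $e$ (symmetric since $N_n^{-1}=V_nV_n^{-1}=N_n$, and open since $N_n=\bigcup_{b\in V_n}V_nb^{-1}$). Let $\mathscr{N}$ be the family of all finite intersections of the $N_n$'s; this is a countable, downward-directed family of open symmetric neighborhoods of $e$, each containing $e$.

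I would then show that $\mathscr{N}$ is a base of neighborhoods of $e$ for a (necessarily first-countable) Hausdorff topological group topology $\tau^{\ast}$ on $G$ with $\tau^{\ast}\subseteq\tau$. Granting this, $(G,\tau^{\ast})$ is a first-countable Hausdorff topological group, hence metrizable by Birkhoff--Kakutani, and the identity map $(G,\tau)\to(G,\tau^{\ast})$ is a continuous bijection onto a metric space, so $(G,\tau)$ is submetrizable. To produce the group topology I would invoke the standard criterion for a base at the identity: a downward-directed family $\mathscr{N}$ of subsets of $G$, all containing $e$, is such a base provided (i) for every $U\in\mathscr{N}$ there is $V\in\mathscr{N}$ with $V^{-1}\subseteq U$; (ii) for every $U\in\mathscr{N}$ and every $a\in G$ there is $V\in\mathscr{N}$ with $aVa^{-1}\subseteq U$; and (iii) for every $U\in\mathscr{N}$ there is $V\in\mathscr{N}$ with $V^{2}\subseteq U$. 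These are conditions (f), (d), (b) (together with directedness (a)) of Proposition~\ref{p0} for the resulting topology, the translation condition (c) then being automatic because $\tau^{\ast}$ is a group topology. Condition (i) is immediate since every member of $\mathscr{N}$ is symmetric, and (ii) is trivial because $G$ is Abelian, so $aVa^{-1}=V$. The coarseness $\tau^{\ast}\subseteq\tau$ follows because each $N_n$ is $\tau$-open and left translations are homeomorphisms of $(G,\tau)$, whence every $\tau^{\ast}$-open set, being a union of translates $xN$ with $N\in\mathscr{N}$, is $\tau$-open.

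The main obstacle is condition (iii), and this is exactly where the Abelian hypothesis does the essential work. In a paratopological group the inverse operation is not continuous, so the factor $V_p^{-1}$ in $N_p=V_pV_p^{-1}$ cannot be controlled by shrinking $V_p$, and a crude bound $N_pN_p\subseteq V_pV_p^{-1}V_pV_p^{-1}$ is hopeless. The key is that commutativity collapses the four-fold product via the identity $(V_pV_p^{-1})^2 = V_p^{2}(V_p^{2})^{-1}$. It then suffices to arrange $V_p^{2}\subseteq v_m^{-1}V_m$ for some fixed $v_m\in V_m$: the set $v_m^{-1}V_m$ is an open neighborhood of $e$, so by joint continuity of multiplication there is an open neighborhood $O$ of $e$ with $O^{2}\subseteq v_m^{-1}V_m$, and since $\{V_n\}$ is a $\pi$-base there is some $V_p\subseteq O$, whence $V_p^{2}\subseteq v_m^{-1}V_m$. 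Combining with the Abelian identity gives $N_p^{2}=V_p^{2}(V_p^{2})^{-1}\subseteq (v_m^{-1}V_m)(v_m^{-1}V_m)^{-1}=V_mV_m^{-1}=N_m$. For a general $U=N_{m_1}\cap\cdots\cap N_{m_k}\in\mathscr{N}$ one intersects the corresponding $N_{p_i}$ to obtain $V\in\mathscr{N}$ with $V^{2}\subseteq U$, establishing (iii).

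Finally I would check that $(G,\tau^{\ast})$ is Hausdorff, i.e. $\bigcap\mathscr{N}=\bigcap_n N_n=\{e\}$. Given $g\neq e$, the Hausdorffness of $(G,\tau)$ together with Proposition~\ref{p0}(e) yields a $\tau$-open neighborhood $U$ of $e$ with $g\notin UU^{-1}$; choosing a $\pi$-base element $V_n\subseteq U$ gives $N_n=V_nV_n^{-1}\subseteq UU^{-1}$, so $g\notin N_n$. Hence $\bigcap_n N_n=\{e\}$ and $\tau^{\ast}$ is Hausdorff, completing the argument. The only genuinely delicate point is the square-refinement above; everything else is formal, and the reason this strengthens Corollary~\ref{c5} is precisely that a $\pi$-base rather than a full local base suffices once the uncontrollable inverses are absorbed by commutativity.
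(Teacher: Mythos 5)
Your proof is correct and follows essentially the same route as the paper's: both pass to a coarser Hausdorff topological group topology whose neighborhoods of $e$ are generated by the sets $V_nV_n^{-1}$, with commutativity collapsing $(VV^{-1})^2$ into $V^2(V^2)^{-1}$ and a translation $v_m^{-1}V_m$ used to control a $\pi$-base element by neighborhoods of the identity. The only difference is organizational: the paper first builds $\tau^{\ast}$ from the full (possibly uncountable) local base via the proof of Theorem~\ref{t0} and then checks that the countable family $\{V_nV_n^{-1}\}$ is a base for that same topology, whereas you verify the group-topology axioms directly for the countable family, which is somewhat more self-contained.
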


\begin{proof}
Let $\mathscr{B}=\{U_{\alpha}: \alpha<\kappa\}$ be a local base at the neutral element $e$.
It follows from the proof of Theorem~\ref{t0} that the family $\{U_{\alpha}U_{\alpha}^{-1}: \alpha<\kappa\}$ is a local base at point $e$ in the Tychonoff topological group $(G, \tau^{\ast})$.

Let $\mathscr{C}=\{V_{n}: n\in\mathbb{N}\}$ be a local $\pi$-base at $e$. Put $\mathscr{F}=\{V_{n}V_{n}^{-1}: n\in \mathbb{N}\}$. Then $\mathscr{F}'=\{int(V_{n}V_{n}^{-1}): n\in \mathbb{N}\}$ is a local base at $e$ in $\tau^{\ast}$.

Indeed, for each $n\in\mathbb{N}$ and fix a point $x\in V_{n}$, then $x^{-1}V_{n}$ is an open neighborhood at $e$ in $\tau$, and hence there exists an $U_{\alpha}\in\mathscr{B}$ such that $U_{\alpha}\subset x^{-1}V_{n}$. Thus $U_{\alpha}U_{\alpha}^{-1}\subset x^{-1}V_{n}V_{n}^{-1}x=V_{n}V_{n}^{-1}$, which implies that $V_{n}V_{n}^{-1}$ is a neighborhood of $e$ in $\tau^{\ast}$. On the other hand, fix $\alpha<\kappa$, there is $n\in \mathbb{N}$ such that $V_n\subset U_\alpha$. $V_nV_n^{-1}\subset U_{\alpha}U_{\alpha}^{-1}$. Therefore, $\mathscr{F}'=\{int(V_{n}V_{n}^{-1}): n\in \mathbb{N}\}$ is a local base at $e$ in $\tau^{\ast}$.

Since first-countable topological group metrizable, we have $(G, \tau^{\ast})$ is metrizable. Therefore, $G$ is submetrizable.
\end{proof}

Since every submetrizable space is (hereditarily) Dieudonn\'{e} complete, Corollary~\ref{c5} give a partial answer to Question~\ref{q4}.

\begin{theorem}\label{c3}
Let $(G, \tau)$ be a Hausdorff separable SIN first-countable paratopological group. There exists a continuous isomorphism of $G$ onto a Tychonoff second-countable topological group.
\end{theorem}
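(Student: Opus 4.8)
The plan is to let Theorem~\ref{t0} do essentially all of the work, since its proof already manufactures a coarser topological-group topology on $G$; the only genuinely new task is to upgrade metrizability to second countability by exploiting separability.

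First I would recall the construction buried in the proof of Theorem~\ref{t0}. Fix a decreasing countable local base $\{U_n : n\in\mathbb{N}\}$ at the neutral element $e$ of $(G,\tau)$, and let $\tau^{\ast}$ be the topology generated by the neighborhood system $\{\mathcal{B}_x\}_{x\in G}$ with $\mathcal{B}_x=\{xU_nU_n^{-1} : n\in\mathbb{N}\}$. Theorem~\ref{t0} establishes that $(G,\tau^{\ast})$ is a Hausdorff first-countable topological group, hence metrizable and Tychonoff. Since $\tau^{\ast}$ is coarser than $\tau$, the identity map $\mathrm{id}\colon (G,\tau)\to (G,\tau^{\ast})$ is a bijective homomorphism that is continuous precisely because $\tau^{\ast}\subseteq\tau$; that is, it is a continuous isomorphism. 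It therefore suffices to verify that $(G,\tau^{\ast})$ is second countable.

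Next I would extract second countability from separability. Because $\tau^{\ast}\subseteq\tau$, every $\tau$-dense subset of $G$ is a fortiori $\tau^{\ast}$-dense, so the separability of $(G,\tau)$ descends to $(G,\tau^{\ast})$. A separable metrizable space is second countable, whence $(G,\tau^{\ast})$ is a Tychonoff second-countable topological group, and $\mathrm{id}$ is the promised continuous isomorphism onto it.

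I do not expect a serious obstacle, since the real difficulty has been absorbed into Theorem~\ref{t0}; the two new ingredients are merely the elementary facts that separability is inherited by a coarser topology and that, for metrizable spaces, separability coincides with second countability. The only point deserving a moment's care is confirming that $\mathrm{id}$ genuinely lands in a \emph{topological} group rather than merely a paratopological one, so that the codomain is as claimed—but this is exactly what the proof of Theorem~\ref{t0} guarantees for $(G,\tau^{\ast})$.
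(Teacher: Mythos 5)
Your proposal is correct and matches the paper's own argument: both invoke the topology $\tau^{\ast}$ constructed in the proof of Theorem~\ref{t0}, note that separability passes to the coarser topology, and conclude second countability from separability plus metrizability. No further comment is needed.
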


\begin{proof}
By the proof of Theorem~\ref{t0}, we know that $(G, \tau^{\ast})$ is metrizable. Since $G$ is separable and $\tau^{\ast}\subset \tau$, $(G, \tau^{\ast})$ is separable, and hence $(G, \tau^{\ast})$ is a second-countable topological group. Hence there exists a continuous isomorphism of $G$ onto a Tychonoff second-countable topological group.
\end{proof}

By Theorem~\ref{l7}, we have the following corollary, which gives a partial answer to Question~\ref{q0}.

\begin{corollary}
Let $(G, \tau)$ be a Hausdorff $\omega$-narrow first-countable SIN paratopological group. There exists a continuous isomorphism of $G$ onto a Tychonoff second-countable topological group.
\end{corollary}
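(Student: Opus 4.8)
The plan is to obtain this corollary as an immediate composition of the two hypotheses-bridging results already established, namely Theorem~\ref{l7} and Theorem~\ref{c3}. The only gap between the hypotheses of the corollary and those of Theorem~\ref{c3} is the separability assumption, and Theorem~\ref{l7} is tailor-made to fill exactly that gap.

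Concretely, I would first invoke Theorem~\ref{l7}: since $(G,\tau)$ is an $\omega$-narrow first-countable paratopological group, it is separable. I would then observe that all of the remaining standing hypotheses—Hausdorff, SIN, first-countable—are carried over unchanged, so that $(G,\tau)$ now satisfies every hypothesis of Theorem~\ref{c3}, namely that it is a Hausdorff separable SIN first-countable paratopological group. Applying Theorem~\ref{c3} directly yields the desired continuous isomorphism of $G$ onto a Tychonoff second-countable topological group, which completes the argument.

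Because the corollary is a pure specialization, there is essentially no technical obstacle to overcome: the substantive work has already been done inside the proofs of Theorem~\ref{t0} (construction of the coarser topology $\tau^{\ast}$ and verification that it is a metrizable topological group) and Theorem~\ref{c3} (using separability to upgrade metrizability to second countability). The only point warranting any care is making sure the chain of implications is licensed—that separability as delivered by Theorem~\ref{l7} is precisely the separability consumed by Theorem~\ref{c3}, with no mismatch in the ambient topology—but since both statements refer to the same group $(G,\tau)$ this verification is routine. Thus I expect the proof to consist of a single sentence chaining the two theorems, with no independent estimates or constructions required.
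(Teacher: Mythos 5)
Your proposal is correct and matches the paper's own argument exactly: the corollary is stated immediately after the remark ``By Theorem~\ref{l7}, we have the following corollary,'' i.e., the paper likewise uses Theorem~\ref{l7} to supply separability and then applies Theorem~\ref{c3}. No further comment is needed.
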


The following two theorems give another answers to Questions~\ref{q1} and~\ref{q4}.

\begin{theorem}
If $(G, \tau)$ is a Hausdorff saturated\footnote{A paratopological group $G$ is
{\it saturated} if for any neighborhood $U$ of $e$ the set $U^{-1}$ has nonempty
interior in $G$.} first-countable paratopological group, then $G$ is submetrizable.
\end{theorem}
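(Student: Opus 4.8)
The plan is to follow the template of the proof of Theorem~\ref{t0}. Fix a decreasing countable local base $\{U_n:n\in\mathbb{N}\}$ at the neutral element $e$ and, for each $x\in G$, set $\mathcal{B}_x=\{xU_nU_n^{-1}:n\in\mathbb{N}\}$. These families generate a coarser first-countable topology $\tau^{\ast}\subseteq\tau$ (each $U_nU_n^{-1}$ contains the $\tau$-open set $U_n\ni e$), whose base sets $U_nU_n^{-1}$ are symmetric. I would then verify the Pontrjagin conditions (a)--(f) of Proposition~\ref{p0} for $\tau^{\ast}$ and conclude that $(G,\tau^{\ast})$ is a Hausdorff first-countable topological group; being first-countable it is metrizable, and since $\tau^{\ast}\subseteq\tau$ the identity map is a continuous condensation of $G$ onto a metric space, so $G$ is submetrizable.

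Most of the conditions are cheap and do not use saturation. Condition (a) is nesting; condition (c) is property (BP2) from the proof of Theorem~\ref{t0}, whose verification uses only the Pontrjagin conditions of $\tau$ itself; condition (f) holds because each $U_nU_n^{-1}$ is symmetric; and condition (e) is exactly the Hausdorffness $\bigcap_n U_nU_n^{-1}=\{e\}$ of $G$. Condition (d) is also automatic: given $n$ and $x$, choose $m$ with $xU_mx^{-1}\subseteq U_n$ (possible by condition (d) for $\tau$); then $x(U_mU_m^{-1})x^{-1}=(xU_mx^{-1})(xU_mx^{-1})^{-1}\subseteq U_nU_n^{-1}$.

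The only real work, and the point where saturation must enter, is condition (b): $(U_mU_m^{-1})^{2}\subseteq U_nU_n^{-1}$ for a suitable $m$. As in Theorem~\ref{t0}, the obstruction is the mixed factor $U_m^{-1}U_m$ sitting in the middle of $U_mU_m^{-1}U_mU_m^{-1}$; there the SIN condition was used to control it, and here saturation will play that role. Concretely, I would isolate the sub-lemma: \emph{for every $k$ there is $m>k$ with $U_m^{-1}U_m\subseteq U_kU_k^{-1}$}. Granting it, the computation of Theorem~\ref{t0} goes through verbatim: pick $k$ with $U_k^{2}\subseteq U_n$, then $m>k$ from the sub-lemma, and $U_mU_m^{-1}U_mU_m^{-1}\subseteq U_m(U_kU_k^{-1})U_m^{-1}\subseteq U_kU_kU_k^{-1}U_k^{-1}\subseteq U_nU_n^{-1}$.

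The heart of the argument is thus the sub-lemma, and this is where I expect the main difficulty to lie. The idea is to exploit saturation as follows. Since $G$ is saturated, $O:=\mathrm{int}(U_k^{-1})$ is nonempty and open; fix $g\in O$ and put $N:=g^{-1}O$, an open neighborhood of $e$. On one hand $N\subseteq g^{-1}U_k^{-1}$, so $N^{-1}\subseteq U_kg$; on the other hand $gN=O\subseteq U_k^{-1}$. Choosing $m$ with $U_m\subseteq N$, any element of $U_m^{-1}U_m$ has the form $a^{-1}b$ with $a,b\in U_m$, where $a^{-1}\in U_m^{-1}\subseteq N^{-1}\subseteq U_kg$ and $gb\in gU_m\subseteq gN\subseteq U_k^{-1}$; writing $a^{-1}=ug$ with $u\in U_k$ gives $a^{-1}b=u(gb)\in U_kU_k^{-1}$. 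Hence $U_m^{-1}U_m\subseteq U_kU_k^{-1}$, which is precisely the sub-lemma, so condition (b) follows and the remaining verifications are routine. The delicate point to watch is that saturation controls $U_k^{-1}$ only through the location of its interior $O$, not at $e$ itself; the trick above is to translate $O$ back to $e$ by the single element $g\in O$, using that $g^{-1}O$ and $gN=O$ give one-sided bounds on both $U_m^{-1}$ and $gU_m$ simultaneously.
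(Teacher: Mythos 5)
Your proof is correct, but it takes a genuinely different route from the paper's. The paper's own argument is essentially a citation: it introduces the topology $\sigma$ consisting of all sets $U$ for which some single $n$ satisfies $xU_nU_n^{-1}\subset U$ for every $x\in U$, invokes \cite[Theorem 3.2]{BT} to conclude that saturation forces $(G,\sigma)$ to be a topological group, and then observes that $\sigma$ is $T_1$, first-countable and coarser than $\tau$, hence metrizable. You instead rebuild the coarser group topology by hand on the exact pattern of Theorem~\ref{t0}, and the one genuinely new ingredient is your sub-lemma $U_m^{-1}U_m\subset U_kU_k^{-1}$: fixing $g$ in the nonempty interior $O$ of $U_k^{-1}$, translating to $N=g^{-1}O\ni e$, and reading off the two one-sided inclusions $U_m^{-1}\subset N^{-1}\subset U_kg$ and $gU_m\subset O\subset U_k^{-1}$ for $U_m\subset N$ is a correct and rather clean way to extract from saturation exactly the control over the mixed word $U_m^{-1}U_m$ that the SIN hypothesis supplied in Theorem~\ref{t0}; the factorization $a^{-1}b=(ug)(b)=u(gb)\in U_kU_k^{-1}$ checks out, and the rest of the Pontrjagin verification ((a), (c) via (BP2), (d) by conjugating $U_m$, (f) by symmetry, (e) from $\bigcap_nU_nU_n^{-1}=\{e\}$ together with condition (b)) goes through as you say. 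In effect you have reproved the special case of \cite[Theorem 3.2]{BT} that the paper uses as a black box. What your approach buys is self-containedness and a transparent localization of where saturation enters; what the paper's buys is brevity and the canonicity of $\sigma$. No gap.
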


\begin{proof}
Suppose that $\{U_{n}: n\in\mathbb{N}\}$ is a countable local base at $e$. Let $$\sigma=\{U\subset G: \mbox{There exists an}\ n\in \mathbb{N}\ \mbox{such that}\ xU_{n}U_{n}^{-1}\subset U\ \mbox{for each}\ x\in U\}.$$ Since $G$ is saturated, it follows from \cite[Theorem 3.2]{BT} that $(G, \sigma)$ is a topological group. Obvious, $(G, \sigma)$ is $T_{1}$ since $(G, \tau)$ is a Hausdorff, and hence $(G, \sigma)$ is regular. Then $(G, \sigma)$ is first-countable, and thus it is metrizable. Therefore, $(G, \tau)$ is submetrizable.
\end{proof}

\begin{theorem}
If $(G, \tau)$ is a Hausdorff feebly compact\footnote{A space $X$ is called {\it feebly compact} if each locally finite  family of open subsets in $X$ is finite.} first-countable paratopological group, then $G$ is submetrizable.
\end{theorem}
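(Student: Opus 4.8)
The plan is to reduce this statement to the theorem on Hausdorff saturated first-countable paratopological groups proved just above, by showing that feeble compactness already forces $G$ to be \emph{saturated}. Granting this, since $G$ is Hausdorff, saturated and first-countable, that preceding theorem supplies a coarser metrizable topological-group topology on $G$, and the identity map then witnesses that $G$ is submetrizable. So the entire argument rests on a single claim: every feebly compact paratopological group is saturated, i.e.\ for each neighbourhood $U$ of $e$ the set $U^{-1}$ has nonempty interior.

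To establish saturatedness I would argue by contradiction, exploiting the joint continuity of the multiplication (Proposition~\ref{p0}). Fix a decreasing first-countable base $\{V_n\}$ at $e$ with $V_{n+1}V_{n+1}\subseteq V_n$ and $V_1\subseteq U$. If $\mathrm{int}(U^{-1})=\emptyset$, then $\mathrm{int}(V_1^{-1})=\emptyset$ as well, so no nonempty open set is contained in $V_1^{-1}$; in particular each $V_n\not\subseteq V_1^{-1}$, and I may pick $w_n\in V_n$ with $w_n^{-1}\notin V_1$. This yields a sequence with $w_n\to e$ while every $w_n^{-1}$ stays in the closed set $G\setminus V_1$. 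Now comes the decisive point: I claim feeble compactness forces $(w_n^{-1})$ to have a cluster point $p$. Passing to a subnet (indexed by the neighbourhoods of $p$) along which $w_n^{-1}\to p$, and noting that $w_n\to e$ along the same net, the joint continuity of the multiplication forces the constant value $w_nw_n^{-1}=e$ to converge to $e\cdot p=p$, whence $p=e$ by the Hausdorff property. But $p\in\overline{\{w_n^{-1}\}}\subseteq G\setminus V_1$, so $p\neq e$, a contradiction. Hence $\mathrm{int}(U^{-1})\neq\emptyset$ and $G$ is saturated.

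The hard part is precisely the extraction of the cluster point $p$, since feeble compactness is a statement about locally finite families of open sets rather than about sequences, and a first-countable feebly compact space need not be countably compact (the Mrówka space is such an example). The resolution must therefore use the homogeneity of the paratopological group rather than feeble compactness alone: if $(w_n^{-1})$ had no cluster point, one would manufacture from the translated basic neighbourhoods $w_n^{-1}V_{k_n}$, with the indices $k_n$ chosen so that these sets shrink and separate, an infinite locally finite family of nonempty open subsets of $G$, contradicting feeble compactness. Carrying out this construction—choosing the $k_n$ and verifying local finiteness in the absence of regularity—is the delicate part of the argument. Alternatively, one may simply invoke the known fact, in the same spirit as the saturated case handled through \cite{BT}, that feebly compact paratopological groups are saturated, after which the conclusion is immediate from the preceding theorem.
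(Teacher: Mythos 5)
Your reduction to the saturated case would be a legitimate strategy, but the proposal stands or falls with the single claim that every Hausdorff feebly compact paratopological group is saturated, and that claim is exactly what you do not prove. The first version of your argument extracts a cluster point $p$ of the sequence $(w_n^{-1})$; feeble compactness gives no such point, since it only guarantees ``cluster points'' of sequences of nonempty \emph{open} sets (some $p$ has every neighbourhood meeting infinitely many members of the family), not of sequences of points --- as you yourself note via the Mr\'{o}wka example. Your proposed repair, manufacturing a locally finite family out of the translates $w_n^{-1}V_{k_n}$, is precisely where the whole difficulty is concentrated and is left entirely undone: a point $p$ witnessing feeble compactness for $\{w_n^{-1}V_{k_n}\}$ only gives $pV_m\cap w_n^{-1}V_{k_n}\neq\emptyset$, i.e.\ $w_n^{-1}\in pV_mV_{k_n}^{-1}$, for infinitely many $n$, and without continuity of the inverse (the very property at issue) this cannot be upgraded to $p$ being a cluster point of $(w_n^{-1})$. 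The statement you want is essentially Ravsky's theorem that $2$-pseudocompact paratopological groups are saturated combined with the nontrivial implication that feebly compact paratopological groups are $2$-pseudocompact; ``one may simply invoke the known fact'' is not a substitute for either a proof or a precise citation. As it stands the argument has a genuine gap at its crux.

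For comparison, the paper avoids saturatedness altogether: it passes to the coarser topology $\sigma$ generated at $e$ by $\{\mathrm{int}\,\overline{U_{n}}: n\in\mathbb{N}\}$, which is a regular first-countable paratopological group topology on $G$; the space $(G,\sigma)$ is still feebly compact (a $\sigma$-locally finite family of $\sigma$-open sets is $\tau$-locally finite and consists of $\tau$-open sets), it has a regular $G_{\delta}$-diagonal by \cite{LC}, and a regular feebly compact space with a regular $G_{\delta}$-diagonal is metrizable; hence $(G,\tau)$ is submetrizable. If you wish to salvage your route, you must either give a complete proof that feeble compactness implies saturatedness for Hausdorff paratopological groups or locate and cite that result explicitly; otherwise the semi-regularization argument is the shorter path.
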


\begin{proof}
Suppose that $\{U_{n}: n\in\mathbb{N}\}$ is a local base at $e$. Then the family $\{\mbox{int}\overline{U_{n}}: n\in\mathbb{N}\}$ is a local base at $e$ for a regular paratopological group topology $\sigma$ on $G$. Obviously, $(G, \sigma)$ is feebly compact. Since $(G, \sigma)$ is first-countable, $(G, \sigma)$ has a regular $G_{\delta}$-diagonal \cite{LC}. Then $(G, \sigma)$ is metrizable since a regular feebly compact space with a regular $G_{\delta}$-diagonal is metrizable. Therefore, $(G, \tau)$ is submetrizable.
\end{proof}

Next, we gives an another answer to Question~\ref{q2}.

\begin{theorem}\label{t5}
Let $(G, \tau)$ be a Hausdorff SIN  paratopological group. If $G$ is locally countable, then $G$ is submetrizable.
\end{theorem}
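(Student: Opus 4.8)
The plan is to reduce everything to the topology $\tau^{\ast}$ constructed in the proof of Theorem~\ref{t0}. Here ``locally countable'' means that the neutral element $e$ has a countable open neighborhood $W$ (by homogeneity this is the same as saying every point does). The decisive quantitative observation is that, since $W$ is countable, the set $WW^{-1}=\{ab^{-1}:a,b\in W\}$ is countable as well; this will confine all obstructions to the Hausdorffness of the coarse topology to a countable set.

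First I would fix such a $W$ and enumerate $WW^{-1}\setminus\{e\}=\{g_{k}:k\in\mathbb{N}\}$. For each $k$, Hausdorffness of $G$, that is condition (e) of Proposition~\ref{p0}, yields an open neighborhood $V_{k}$ of $e$ with $g_{k}\notin V_{k}V_{k}^{-1}$ (equivalently $g_{k}V_{k}\cap V_{k}=\emptyset$). Next, using the Pontrjagin conditions together with the SIN property exactly as in Theorem~\ref{t0}, I would recursively build a decreasing sequence $\{U_{n}\}$ of open neighborhoods of $e$ satisfying $U_{0}\subset W$, $U_{n+1}^{2}\subset U_{n}$, $xU_{n+1}x^{-1}\subset U_{n}$ for every $x\in G$, and $U_{k}\subset V_{k}$ for each $k$. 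At each stage this is just a simultaneous shrinking: intersect $U_{n}$ with a square-root neighborhood, with a SIN-neighborhood provided by the SIN condition, and with the relevant $V_{k}$; so the construction is routine.

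Now define $\tau^{\ast}$ from the countable symmetric bases $\mathcal{B}_{x}=\{xU_{n}U_{n}^{-1}:n\in\mathbb{N}\}$, as in Theorem~\ref{t0}. Since each $U_{n}U_{n}^{-1}$ is open, $\tau^{\ast}\subset\tau$ and the identity map $(G,\tau)\to(G,\tau^{\ast})$ is continuous. The verification that $(G,\tau^{\ast})$ is a first-countable topological group is word-for-word the argument of Theorem~\ref{t0}: condition (f) is immediate because $U_{n}U_{n}^{-1}$ is symmetric, and condition (b) follows from $U_{n+1}^{2}\subset U_{n}$ together with the SIN shrinking (for a given $n$ one takes $k=n+1$ and $m=n+2$, and computes $U_{m}U_{m}^{-1}U_{m}U_{m}^{-1}\subset U_{n}U_{n}^{-1}$ as there). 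It remains to check Hausdorffness of $\tau^{\ast}$, i.e. $\bigcap_{n}U_{n}U_{n}^{-1}=\{e\}$: if $g\notin WW^{-1}$ then $g\notin U_{0}U_{0}^{-1}\subset WW^{-1}$, while if $g=g_{k}\in WW^{-1}\setminus\{e\}$ then $U_{k}U_{k}^{-1}\subset V_{k}V_{k}^{-1}$ already excludes $g_{k}$. Hence $(G,\tau^{\ast})$ is a Hausdorff first-countable topological group, so it is metrizable, and the identity map onto $(G,\tau^{\ast})$ witnesses that $G$ is submetrizable.

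I expect the Hausdorffness of $\tau^{\ast}$ to be the one genuinely new point. In Theorem~\ref{t0} the family $\{U_{n}\}$ was a full countable local base, so $\bigcap_{n}U_{n}U_{n}^{-1}=\{e\}$ came for free; here $G$ need not be first-countable and $\{U_{n}\}$ is only an auxiliary sequence, so a priori this intersection could be strictly larger than $\{e\}$. The entire role of local countability is to guarantee that only the countably many elements of $WW^{-1}$ can possibly survive in all the $U_{n}U_{n}^{-1}$, and that these can be separated from $e$ one by one using the $V_{k}$ while still respecting the multiplicative and SIN requirements needed to make $\tau^{\ast}$ a topological group.
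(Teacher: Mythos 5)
Your proposal is correct and follows essentially the same route as the paper: the paper likewise uses local countability to enumerate $UU^{-1}\setminus\{e\}$ and builds a decreasing sequence $\{V_{n}\}$ with $V_{0}=U$, $V_{n+1}^{2}\subset V_{n}$, $xV_{n+1}x^{-1}\subset V_{n}$, and $x_{n}\notin V_{n}V_{n}^{-1}$, obtaining a coarser Hausdorff SIN first-countable paratopological group topology to which Theorem~\ref{t0} applies. The only cosmetic difference is that you inline the construction of $\tau^{\ast}$ from Theorem~\ref{t0} rather than citing it as a black box.
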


\begin{proof}
Since $G$ is locally countable, there exists an open neighborhood $U$ of $e$ such that $U$ is a countable set. Then $UU^{-1}$ is also a countable set. Let $UU^{-1}\setminus\{e\}=\{x_{n}: n\in\mathbb{N}\}$. Since $G$ is a paratopological group, we can find a family of countably many neighborhoods $\{V_{n}: n\in\omega\}$ of $e$ satisfying the following conditions: \\
(i) $V_{0}=U$;\\
(ii) For each $n\in\omega$, then $V_{n+1}^{2}\subset V_{n}$;\\
(iii) For each $n\in\mathbb{N}$, then $x_{n}\not\in V_{n}V_{n}^{-1}$ (This is possible since $G$ is Hausdorff.)\\
(iv) For each $n\in\omega$, then $xV_{n+1}x^{-1}\subset V_{n}$ for each $x\in G$.

Since $G$ is a SIN group, the topology $\sigma$ generated by the neighborhood basis $\{V_{n}: n\in\mathbb{N}\}$ is a paratopological group. Clearly, ($G$, $\sigma$) is coarser than $\tau$. Since $\bigcap_{n\in\mathbb{N}}V_{n}V_{n}^{-1}=\{e\}$ by (iii), $\sigma$ is Hausdorff. Therefore, $(G, \sigma)$ is a Hausdorff SIN first-countable paratopological group, and it follows from Theorem~\ref{t0} that $G$ is submetrizable.
\end{proof}

\begin{corollary}\label{c6}
Let $(G, \tau)$ be a Hausdorff Abelian  paratopological group. If $G$ is locally countable, then $G$ is submetrizable.
\end{corollary}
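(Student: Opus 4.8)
The plan is to deduce this corollary directly from Theorem~\ref{t5}, whose hypotheses are that $G$ be a Hausdorff SIN paratopological group that is locally countable. Since $G$ is already assumed Hausdorff and locally countable, the only thing I need to verify is that an Abelian paratopological group is automatically a SIN-group; once that is in hand, Theorem~\ref{t5} applies verbatim and gives submetrizability.

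First I would recall the SIN condition: with a base $\mathscr{B}$ at the neutral element $e$, the group $G$ is SIN provided that for each $U\in\mathscr{B}$ there is a $V\in\mathscr{B}$ with $xVx^{-1}\subset U$ for every $x\in G$. The key observation is that in an Abelian group conjugation is trivial, so for any subset $A\subset G$ and any $x\in G$ one has $xAx^{-1}=A$. Consequently, given any $U\in\mathscr{B}$, I may simply take $V=U$ and note that $xUx^{-1}=U\subset U$ for all $x\in G$. Thus the SIN condition is satisfied trivially by every Abelian paratopological group, regardless of the choice of base.

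Having established that $G$ is a SIN-group, I would then invoke Theorem~\ref{t5}: $G$ is a Hausdorff SIN paratopological group which is locally countable, so $G$ is submetrizable. This completes the argument.

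There is essentially no obstacle in this deduction, as the entire content is carried by Theorem~\ref{t5}; the commutativity of $G$ merely supplies the SIN hypothesis for free. I would therefore present the proof as a short remark that an Abelian paratopological group is trivially SIN, followed by a one-line appeal to Theorem~\ref{t5}.
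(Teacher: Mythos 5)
Your proposal is correct and matches the paper's intended argument exactly: the corollary is stated immediately after Theorem~\ref{t5} precisely because an Abelian paratopological group is trivially a SIN-group (conjugation being the identity, $V=U$ witnesses the SIN condition), so Theorem~\ref{t5} applies verbatim. No gap.
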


\bigskip

\section{Developable paratopological groups}
Recall that a topological space is  {\it symmetrizable} if its topology is generated
by a symmetric, that is, by a distance function satisfying all the usual restrictions
on a metric, except for the triangle inequality \cite{Ar}.

Now, we give a negative answer\footnote{Li, Mou and Wang \cite{LP} also obtained a non-metrizable Moore paratopological group.} to Question~\ref{q9} by modifying \cite[Example 2.1]{LL2010}.

\begin{example}
There exists a separable, Moore paratopological group $G$ such that $G$ is not metrizable.
\end{example}

\begin{proof}
Let $G=\mathbb{R}\times \mathbb{Q}$ be the group with the usual addition. Then we define a topology on $G$ by giving a
local base at the neutral element $(0, 0)$ in $G$. For each $n\in \mathbb{N}$, let
$$U_{n}(0, 0)=\{(0, 0)\}\cup\{(x, y): y\geq nx, y< \frac{1}{n}, y\in \mathbb{Q}, x\geq 0\}.$$ Let $\sigma$ be the topology generated by the local base $\{U_{n}: n\in \mathbb{N}\}$ at the neutral element $(0, 0)$. It is easy to see that $(G, \sigma)$ is a semitopological group. Now, we prove that is a paratopological group. Since $G$ is Abelian, it only need to prove that for each $n\in \mathbb{N}$ there exists an $m\in \mathbb{N}$ such that $U_{m}^{2}\subset U_{n}$. Indeed, fix an $n\in \mathbb{N}$, Then we have $U_{4n}^{2}\subset U_{n}$. For two points $(x_{i}, y_{i})\in U_{4n} (i=1, 2)$, where $x_{i}\geq 0, y_{i}<\frac{1}{4n}, y_{i}\geq 4nx_{i}, y_{i}\in\mathbb{Q} (i=1, 2)$. Let $$(x, y)=(x_{1}, y_{1})+(x_{2}, y_{2})=(x_{1}+x_{2}, y_{1}+y_{2}).$$ Obviously, we have $$x=x_{1}+x_{2}\geq 0, y=y_{1}+y_{2}<\frac{1}{4n}+\frac{1}{4n}=\frac{1}{2n}<\frac{1}{n},$$ and $$y=y_{1}+y_{2}\geq 4nx_{1}+4nx_{2}=4nx\geq nx.$$ Then $(x, y)\in U_{n}$, and hence $U_{4n}^{2}\subset U_{n}$. Moreover, it is easy to see that $G$ is regular, separable and first-countable space.

For each $q\in\mathbb{Q}$, it is easy to see that the family $\{\{(x, q)\}: x\in \mathbb{R}\}$ is uncountable discrete and closed, hence $G$ is a $\sigma$-space, and thus it is a $\beta$-space\footnote{Let $(X, \tau)$ be a topological space. A function $g: \omega\times X \rightarrow\tau$ satisfies that
$x\in g(n, x)$ for each $x\in X, n \in\omega$. A space $X$ is a $\beta$-space \cite{Gr} if there is a function $g: \omega\times X \rightarrow\tau$
such that if $x\in g(n, x_{n})$ for each $n\in\omega$, then the sequence $\{x_{n}\}$ has a cluster point in $X$. }. $G$ is a Moore space by \cite[Corollary 2.1]{LL2010}.
 Hence $G$ is semi-metrizable \cite{Gr}. Therefore $G$ is symmetrizable since a space is semi-metrizable if only if it is first-countable and symmetrizable \cite{Gr}. However,  $G$ is not metrizable since $G$ is separable and contains a uncountable discrete closed subset.
\end{proof}

\begin{question}\label{q3}
Is every quasi-developable paratopological (semitopological) group a $\beta$-space?
\end{question}

Next, we give a partial answer to Question~\ref{q3}.

\begin{lemma}\cite[Lemma 1.2]{AR2005}\label{l1}
Suppose that $G$ is a paratopological group and not a topological group. Then
there exists an open neighborhood $U$ of the neutral element $e$ of $G$ such that $U\cap U^{-1}$ is
nowhere dense in $G$, that is, the interior of the closure of $U\cap U^{-1}$ is empty.
\end{lemma}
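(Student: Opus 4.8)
The plan is to argue by contradiction, extracting from the failure of the inversion axiom a single neighborhood whose inverse misses $e$ in its interior, and then showing that if $U\cap U^{-1}$ were somewhere dense we could translate that density back to $e$ and contradict this. First I would record what ``not a topological group'' buys us: by Proposition~\ref{p0}, condition (f) fails, so there is an open neighborhood $U_0$ of $e$ such that no open $V\ni e$ satisfies $V^{-1}\subseteq U_0$; equivalently, since $V^{-1}\subseteq U_0\Leftrightarrow V\subseteq U_0^{-1}$, we have $e\notin\mathrm{int}(U_0^{-1})$. Using joint continuity of multiplication (Pontrjagin condition (b) applied twice) I would then fix an open neighborhood $U$ of $e$ with $U^3\subseteq U_0$, and claim that this $U$ is the desired one: $U\cap U^{-1}$ is nowhere dense. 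Suppose not, so that $P:=\mathrm{int}\,\overline{U\cap U^{-1}}$ is a nonempty open set.

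The heart of the argument is a translation trick that forces $e$ into play. Since $P$ is a nonempty open subset of $\overline{U\cap U^{-1}}$ it must meet $U\cap U^{-1}$ (otherwise $U\cap U^{-1}$ would lie in the closed set $G\setminus P$, whence $\overline{U\cap U^{-1}}\subseteq G\setminus P$, contradicting $\emptyset\neq P\subseteq\overline{U\cap U^{-1}}$). I pick $a\in P\cap(U\cap U^{-1})$; in particular $a\in P$ and $a\in U$. Because left translation $L_{a^{-1}}$ is a homeomorphism, $a^{-1}P$ is open and contains $a^{-1}a=e$, so it is an open neighborhood of $e$. The key computation is the one-sided closure estimate, valid in any paratopological group precisely because translations (but not inversion) are homeomorphisms: for any set $B$ one has $\overline{B}=\bigcap\{BV^{-1}:V\ \text{open nbhd of}\ e\}$, hence in particular $\overline{B}\subseteq BU^{-1}$. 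Applying this after translating, I get $a^{-1}P\subseteq a^{-1}\overline{U\cap U^{-1}}=\overline{a^{-1}(U\cap U^{-1})}\subseteq\overline{a^{-1}U^{-1}}\subseteq a^{-1}U^{-1}U^{-1}=a^{-1}(U^2)^{-1}=(U^2a)^{-1}$. Since $a\in U$ gives $U^2a\subseteq U^3\subseteq U_0$, this yields $a^{-1}P\subseteq U_0^{-1}$. Thus $a^{-1}P$ is an open neighborhood of $e$ contained in $U_0^{-1}$, forcing $e\in\mathrm{int}(U_0^{-1})$ and contradicting the choice of $U_0$.

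I expect the main obstacle to be exactly the step that this trick resolves: the hypothesis only hands us \emph{somewhere}-density of $U\cap U^{-1}$, i.e. a witness open set $P$ that need not be anywhere near $e$, whereas the conclusion about $U_0^{-1}$ lives at $e$. Multiplying $P$ on the left by $a^{-1}$ with $a\in P\cap U$ is what slides the offending open set back to the neutral element. A secondary subtlety is that one cannot treat $\overline{U\cap U^{-1}}$ as symmetric or push closures through inversion, since inversion is not continuous; this is why I lean only on the left/right closure formula $\overline{B}\subseteq BU^{-1}$ and on translations being homeomorphisms, and why I build in the extra factor by requiring $U^3\subseteq U_0$ rather than merely $U^2\subseteq U_0$. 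With those two points handled, the chain of inclusions above is routine and closes the proof.
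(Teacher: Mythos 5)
The paper states this lemma as a citation of \cite[Lemma 1.2]{AR2005} and gives no proof of its own, so there is nothing internal to compare against; judged on its own terms, your argument is correct and complete. Every step checks out: the failure of condition (f) of Proposition~\ref{p0} does yield $U_0$ with $e\notin\mathrm{int}(U_0^{-1})$, the one-sided estimate $\overline{B}\subseteq BU^{-1}$ is valid in any paratopological group, and the translation of the witness $P$ by $a^{-1}$ with $a\in P\cap U\cap U^{-1}$ correctly produces an open neighborhood of $e$ inside $U_0^{-1}$, giving the contradiction; this is essentially the standard argument from the cited source.
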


\begin{theorem}\label{t7}
A regular Baire\footnote{Recall that a space is {\it Baire} if the intersection of a
sequence of open and dense subsets is dense.} quasi-developable paratopological group $G$ is a metrizable topological group.
\end{theorem}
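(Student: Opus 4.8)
\emph{Plan.} The strategy is to reduce the statement to showing that $G$ is a topological group, and then to extract that from the Baire property by playing the quasi-development against Lemma~\ref{l1}.

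First I would record that quasi-developability forces first-countability: if $\{\mathscr{P}_n\}$ is a quasi-development, then for each $x$ the open sets $\mbox{st}(x,\mathscr{P}_n)$ (over those $n$ with $x\in\bigcup\mathscr{P}_n$) form a countable neighbourhood base at $x$. Hence $G$ is first-countable, and if any point had a smallest neighbourhood it would be isolated, so by homogeneity $G$ would be discrete and we would be done; thus I may assume $G$ has no isolated points and that every point lies in infinitely many of the sets $\bigcup\mathscr{P}_n$. Since a first-countable topological group is metrizable, it suffices to prove that $G$ is a topological group, i.e. that inversion is continuous.

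Suppose not. By Lemma~\ref{l1} there is an open neighbourhood $U$ of $e$ with $U\cap U^{-1}$ nowhere dense. Let $\tau^{\ast}$ be the coarsest topology refining $\tau$ that makes $G$ a topological group; its basic neighbourhoods of $e$ are the sets $V\cap W^{-1}$ with $V,W\in\tau$, so $\tau^{\ast}$ is again first-countable and hence is metrizable by a left-invariant metric $d^{\ast}$. Now $G$ is a topological group exactly when $\tau=\tau^{\ast}$, i.e. when the identity map $j\colon(G,\tau)\to(G,\tau^{\ast})$ is continuous. Because left translations are homeomorphisms for both $\tau$ and $\tau^{\ast}$ and $d^{\ast}$ is left-invariant, the oscillation of $j$ is constant on $G$; consequently $j$ is continuous everywhere as soon as it is continuous at a single point, and, via this constancy, that is equivalent to the existence, for every $\varepsilon>0$, of a nonempty $\tau$-open set of $d^{\ast}$-diameter less than $\varepsilon$. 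Note also that, since $U\cap U^{-1}$ is a $\tau^{\ast}$-neighbourhood of $e$, it contains a ball $B_{d^{\ast}}(e,r_0)$; by left-invariance every $d^{\ast}$-ball of radius $<r_0$ is a left translate of a subset of the nowhere dense set $U\cap U^{-1}$, and is therefore $\tau$-nowhere dense.

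The heart of the argument, and the step I expect to be the main obstacle, is to produce such a small $\tau$-open set from Baireness. A nonempty $\tau$-open set cannot be a countable union of $\tau$-nowhere dense sets, while the previous paragraph shows that every $d^{\ast}$-ball of small radius \emph{is} $\tau$-nowhere dense. Thus it suffices to cover some nonempty $\tau$-open set by countably many small $d^{\ast}$-balls: such a cover would contradict the Baire property unless some ball, hence some nonempty $\tau$-open subset of it, already has $d^{\ast}$-diameter below the threshold, which is the desired continuity point. This is precisely where the quasi-development must be used: the family $\{\mathscr{P}_n\}$ has only countably many levels, and on a single level the stars behave like a genuine local development, so a Baire-category argument applied across the countably many levels $\mathscr{P}_n$, selecting a level $n$ and a nonempty $\tau$-open set on which the $\mathscr{P}_n$-stars are $d^{\ast}$-small, should supply the required countable control. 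It is exactly this countable star-structure that the Sorgenfrey line lacks (being hereditarily Lindel\"of but not second-countable, it is not quasi-developable), which explains why the hypothesis cannot be dropped. Once a point of continuity of $j$ is found, constancy of the oscillation gives $\tau=\tau^{\ast}$, so $G$ is a topological group; being first-countable it is metrizable, completing the proof.
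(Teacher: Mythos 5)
Your reduction is sound as far as it goes: quasi-developability does give first-countability, Lemma~\ref{l1} is the right tool, and the overall shape (Baire category played against the countably many levels of the quasi-development) matches the paper's. But the step you yourself flag as ``the main obstacle'' is exactly where the proof lives, and your proposed mechanism for it does not work as stated. You want to cover a nonempty $\tau$-open set by \emph{countably many} $d^{\ast}$-balls of small radius, each of which is $\tau$-nowhere dense; but a quasi-development consists of countably many \emph{families} $\mathscr{P}_n$, each possibly uncountable, and the stars $\mathrm{st}(x,\mathscr{P}_n)$ vary with $x$, so the levels do not hand you a countable collection of sets covering an open set, let alone one by left translates of $U\cap U^{-1}$. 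Nothing in the hypotheses makes $(G,\tau^{\ast})$ separable or Lindel\"{o}f, so the countable cover you need is simply not available, and the argument stalls precisely at its crux.

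The paper closes this gap by a different countable decomposition. Fix the open $U$ from Lemma~\ref{l1} and set $A_n=\{x\in G:\mathrm{st}(x,\mathscr{U}_n)\subset xU\}$; the defining property of a quasi-development makes $\{A_n\}_{n}$ a \emph{countable} cover of $G$ --- indexed by levels, not by points, which is the control you were missing. Baireness yields $n_0$ with $\mathrm{int}\,\overline{A_{n_0}}\neq\emptyset$, hence a star $W=\mathrm{st}(x_0,\mathscr{V})$ (with $\mathscr{V}$ the common refinement of two levels) contained in $\overline{A_{n_0}}$. The key exchange is the symmetry of stars: $y\in\mathrm{st}(x_0,\mathscr{U}_{n_0})$ iff $x_0\in\mathrm{st}(y,\mathscr{U}_{n_0})$, so for $y\in W\cap A_{n_0}$ one gets $x_0\in yU$, i.e.\ $x_0^{-1}y\in U^{-1}$; taking closures gives $e\in x_0^{-1}W\subset\overline{U^{-1}}$, so $\overline{U^{-1}}$ is a neighborhood of $e$, whence $\overline{U\cap U^{-1}}\supset U\cap\overline{U^{-1}}$ has nonempty interior, contradicting the nowhere density provided by Lemma~\ref{l1}. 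If you wish to salvage your continuity-point framework, you would still need some such level-indexed decomposition in place of the ball cover; as written, the central step is a genuine gap rather than a routine verification.
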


\begin{proof}
Claim: Let $U$ be an arbitrary open neighborhood of $e$. Then $\overline{U^{-1}}$ is also a neighborhood of $e$.

Suppose that $\{\mathscr{U}_{n}: n\in\mathbb{N}\}$ is a family of open subsets of $G$ such that for each $x\in G$ and $x\in V$ with $V$ open in $G$, there exists an $ n\in\mathbb{N}$ such that $x\in\mbox{st}(x, \mathscr{U}_{n})\subset V$. For each $\mathbb{N}$, put
$$A_{n}=\{x\in G: \mbox{st}(x, \mathscr{U}_{n})\subset x\cdot U\}.$$ It is easy to see that $G=\bigcup\{A_{n}: n\in\mathbb{N}\}$. Since $G$ is Baire, there exists  $n_{0}\in\mathbb{N}$ such that $\mbox{int}\overline{A_{n_{0}}}\neq\emptyset$. Therefore, there exist a point $x_{0}\in G$ and $n_{1}\in \mathbb{N}$ such that $\mbox{st}(x_{0}, \mathscr{U}_{n_{1}})\subset \overline{A_{n_{0}}}$. Let $\mathscr{V}=\{U_{1}\cap U_{2}: U_{1}\in\mathscr{U}_{n_{0}}, U_{2}\in\mathscr{U}_{n_{1}}\}$. Put $W=\mbox{st}(x_{0}, \mathscr{V})$.
For each $W\cap A_{n_{0}}$, it is easy to see  $$x_{0}\in \mbox{st}(y, \mathscr{V})\subset \mbox{st}(y, \mathscr{U}_{n_{0}})\subset y\cdot U,$$hence $y^{-1}x_{0}\in U$, so $x_{0}^{-1}y\in U^{-1}$, hence $x_{0}^{-1}\cdot (W\cap A_{n_{0}})\subset U^{-1}$. Moreover, since $W\subset \overline{A_{n_{0}}}$, then $W\subset \overline{W\cap A_{n_{0}}}$. Therefore, we have $$e\in x_{0}^{-1}W\subset x_{0}^{-1}\cdot\overline{W\cap A_{n_{0}}}\subset \overline{x_{0}^{-1}\cdot (W\cap A_{n_{0}})}\subset \overline{U^{-1}}.$$Since $x_{0}^{-1}W$ is an open neighborhood of $e$, the set $\overline{U^{-1}}$ is also a neighborhood of $e$.

$\overline{U^{-1}}\cap U\subset \overline{U\cap U^{-1}}$, by Claim, $\overline{U\cap U^{-1}}$ contains a neighborhood of $e$, then $U\cap U^{-1}$ is not nowhere dense.  By Lemma~\ref{l1}, $G$ is a topological group. Therefore $G$ is metrizable since first-countable topological groups are metrizable.
\end{proof}

Finally, we pose some questions about developable paratopological groups.

\begin{question}
Is each regular Baire quasi-developable semitopological group $G$ is a paratopological group?
\end{question}

If $G$ is developable, then the answer is affirmative. Indeed, it was  proved that each Baire Moore semitopological group $G$ is a metrizable topological group \cite{CJ}.

\begin{question}
Is every developable or Moore paratopological group submetrizable?
\end{question}

\begin{question}
Is every normal Moore paratopological group submetrizable?
\end{question}

\begin{question}
Is every paratopological group with a base of countable order\footnote{A space $X$ is said to have a
{\it base of countable order}(BCO) \cite{Gr} if there is a sequence
$\{\mathcal {B}_{n}\}$ of base for $X$ such that whenever $x\in
b_{n}\in\mathcal {B}_{n}$ and $(b_{n})$ is decreasing (by set
inclusion), then $\{b_{n}: n\in \mathbb{N}\}$ is a base at $x$.} developable?
\end{question}

\bigskip

\section{Fr\'{e}chet-Urysohn paratopological groups}

First, we need the following Lemma.

\begin{lemma}\cite[Theorem 4.7.5]{A2008}\label{l6}
Every weakly first-countable Hausdorff paratopological group is first-countable.
\end{lemma}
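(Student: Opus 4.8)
The plan is to exploit the homogeneity of $G$ to reduce the whole problem to the neutral element $e$. Since the multiplication of $G$ is jointly continuous, hence separately continuous, every left translation $L_a\colon x\mapsto ax$ is a homeomorphism, so if $\{P_n:n\in\mathbb{N}\}$ is a countable weak base at $e$ --- which I may assume to be decreasing, $P_{n+1}\subseteq P_n$, using the downward-directedness of $\mathscr{P}_e$ --- then $\{aP_n:n\in\mathbb{N}\}$ is a weak base at each $a\in G$. In particular $\{P_n\}$ is a countable network at $e$: every open $U\ni e$ contains some $P_n$. The key reduction is this: if I can show that each $P_n$ is in fact a \emph{neighborhood} of $e$, i.e. $e\in\mathrm{int}(P_n)$, then $\{\mathrm{int}(P_n):n\in\mathbb{N}\}$ is a countable base of open neighborhoods at $e$ (given open $U\ni e$, the network property yields $P_n\subseteq U$, whence $\mathrm{int}(P_n)\subseteq U$), and first-countability at every point follows by translation. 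Thus everything comes down to promoting the weak neighborhoods $P_n$ to genuine neighborhoods.

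To drive this promotion I would first extract from joint continuity the Pontrjagin-type conditions of Proposition~\ref{p0} \emph{for open targets}. Given an open $W\ni e$, continuity of multiplication at $(e,e)$ produces open $U,V\ni e$ with $UV\subseteq W$; the network property then gives $P_s\subseteq U$ and $P_t\subseteq V$, so that $P_r^2\subseteq UV\subseteq W$ with $r=\max\{s,t\}$. Hence for every open $W\ni e$ there is an $r$ with $P_r^2\subseteq W$, and an entirely analogous argument using the continuity of the conjugations $x\mapsto axa^{-1}$ supplies condition (d). These are exactly the squaring and conjugation ingredients that one feeds into the Birkhoff--Kakutani-style prenorm construction available for paratopological groups, whose open balls would furnish a countable base at $e$.

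The heart of the matter --- and the step I expect to be the main obstacle --- is upgrading the previous fact from open targets to the weak neighborhoods themselves, equivalently showing that each $P_n$ contains an open neighborhood of $e$. This cannot be purely formal, since a general homogeneous weakly first-countable space need not be first-countable, so the group operation must be used essentially. Concretely I would study the set $V_n=\{x\in G:\ xP_k\subseteq P_n\ \text{for some }k\in\mathbb{N}\}$, which satisfies $e\in V_n\subseteq P_n$, and try to verify that it is open by means of the weak-base criterion: for $x\in V_n$ with $xP_k\subseteq P_n$ one needs an $m$ with $xP_m\subseteq V_n$, which reduces to a uniform left-translation estimate $pP_j\subseteq P_k$ valid for all sufficiently small $p$. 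Establishing this uniform estimate from the joint continuity of multiplication, rather than from its easy separate or open-target consequences, is precisely the delicate point where the asymmetry of left translation (which moves the base point of the weak base) must be neutralized by the group structure. Once it is in hand, $V_n$ is an open neighborhood of $e$ contained in $P_n$, the weak-neighborhood squaring chain $P_{n+1}^2\subseteq P_n$ follows, and the standard prenorm construction yields a countable base of open sets at $e$, completing the proof.
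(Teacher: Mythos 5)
This lemma is not proved in the paper at all: it is quoted verbatim from Arhangel'ski\v{\i} and Tkachenko \cite[Theorem 4.7.5]{A2008}, so there is no in-paper argument to compare yours against. Judged on its own terms, your proposal is a proof plan with an explicitly unclosed hole at exactly the point where the theorem lives. Your reduction is sound: translations are homeomorphisms, so it suffices to show each element $P_n$ of a decreasing countable weak base at $e$ contains an open neighbourhood of $e$, and the candidate set $V_n=\{x\in G:\ xP_k\subseteq P_n\ \text{for some }k\}$ satisfies $e\in V_n\subseteq P_n$. But the openness of $V_n$, which you defer to a ``uniform left-translation estimate,'' is not a technical detail one can wave at --- it is equivalent to the theorem. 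Indeed, by the weak-base criterion $V_n$ is open iff for every $x\in V_n$ some $xP_m\subseteq V_n$; taking $x=e$ this already requires $P_m\subseteq V_k$ for suitable $m,k$, and conversely that single statement (for every $k$ there is $m$ such that each $p\in P_m$ satisfies $pP_j\subseteq P_k$ for some $j$) implies all the $V_n$ are open and hence that the $P_n$ are neighbourhoods. So your argument reduces the claim to a restatement of itself, quantified over ``sufficiently small $p$'' --- where ``small'' can only mean $p\in P_m$, which is not yet known to be a neighbourhood. That is the circularity.

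Two further signs that the essential idea is missing: (i) the Hausdorff hypothesis is never used in your outline, yet it appears in the statement and must enter the real proof (e.g.\ through the fact that in a Hausdorff sequential setting the closure of a convergent sequence is the sequence together with its limit, which is what lets one manufacture contradictions from sequences $x_m\in P_m\setminus(\text{something})$); and (ii) the only consequences of joint continuity you actually extract --- $P_r^2\subseteq W$ and the conjugation analogue for \emph{open} targets $W$ --- cannot interact with the sets $P_n$ precisely because those are not known to be open, so the Birkhoff--Kakutani prenorm machinery you invoke at the end has nothing to feed on (and would in any case be superfluous once each $P_n$ is a neighbourhood). To complete the proof you would need a genuinely new step, along the lines of: assume some $P_n$ is not a neighbourhood of $e$, extract a sequence in $G\setminus P_n$ converging to $e$ via the weak base, and use the group multiplication together with Hausdorffness to contradict the weak-base property; as it stands, the proposal identifies where the difficulty is but does not resolve it.
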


Arhangel'skii proved that if a topological group G is an image of a separable metrizable space under a pseudo-open\footnote{A map $f: X\to Y$ is pseudo-open if for each $y\in Y$ and every open set $U$ containing $f^{-1}(y)\subset U$  one has $y\in int(f(U))$.} map, then G is metrizable \cite{A2011}. We have the following.

\begin{theorem}\label{t8}
Let $G$ be a uncountable paratopological group. Suppose that $G$ is a pseudoopen image of a separable metric space, then $G$ is a separable and metrizable.
\end{theorem}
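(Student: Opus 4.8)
The plan is to prove that $G$ is separable and metrizable by exploiting three facts: $G$ is a paratopological group, $G$ is a pseudo-open (hence quotient) image of a separable metric space, and $G$ is uncountable. The first move is to record what the domain gives us. Since $G$ is the pseudo-open image of a separable metric space $M$, and pseudo-open maps preserve the Fr\'{e}chet--Urysohn property, $G$ is Fr\'{e}chet--Urysohn; moreover, being a continuous image of a separable space, $G$ is separable. The real work is to upgrade ``Fr\'{e}chet--Urysohn'' to ``first-countable,'' because once $G$ is a first-countable separable paratopological group we can hope to invoke the structure results already available, and once it is first-countable we can apply Arhangel'ski\v{\i}'s theorem cited just above the statement (a topological group that is a pseudo-open image of a separable metric space is metrizable) -- but only after we know $G$ is actually a \emph{topological} group, not merely paratopological.

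The central idea I would pursue is to show that $G$ is in fact weakly first-countable, and then feed this into Lemma~\ref{l6} to conclude $G$ is first-countable. Here the pseudo-open map $f\colon M\to G$ is the key tool: for the neutral element $e$, choose $x\in f^{-1}(e)$ and use a countable base $\{B_n\}$ at $x$ in $M$; the pseudo-openness should let the images $f(B_n)$ (or finite-type combinations of them) serve as a weak base at $e$, because pseudo-openness is precisely the condition guaranteeing $y\in \operatorname{int}(f(U))$ whenever $f^{-1}(y)\subset U$. Translating this countable weak base at $e$ across $G$ by the (separately continuous, indeed jointly continuous) group operation yields the weak-base structure $\{\mathscr{P}_x\}$ of the required kind, with each $\mathscr{P}_x$ countable. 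Invoking Lemma~\ref{l6} then gives genuine first-countability of the Hausdorff paratopological group $G$.

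At this point $G$ is a first-countable separable paratopological group, and the next task is to pass from paratopological to topological. This is where the uncountability hypothesis and the metric domain must combine: I expect to argue that a first-countable paratopological group which is a pseudo-open image of a separable metric space cannot have the inverse map fail continuity without forcing $G$ to be countable (or otherwise contradicting uncountability). One natural route is to show $G$ condenses nicely or that the failure of inversion continuity, via Lemma~\ref{l1}, would make $U\cap U^{-1}$ nowhere dense; combining this with developability/Baire-type arguments analogous to Theorem~\ref{t7} should force $G$ to be a topological group. Once $G$ is established as a first-countable topological group that is a pseudo-open image of a separable metric space, Arhangel'ski\v{\i}'s cited result immediately yields metrizability, and separability is already in hand.

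The hard part, I expect, will be the passage from paratopological to topological group structure, and pinning down exactly where uncountability is used. Fr\'{e}chet--Urysohn and first-countability come almost for free from pseudo-openness and the metric domain, but the Sorgenfrey line shows that a separable first-countable paratopological group need not be a topological group, so the hypotheses must rule out such behavior. I suspect the intended mechanism is that a pseudo-open image of a separable metric space is a \emph{Baire}, quasi-developable (indeed developable) space, so that Theorem~\ref{t7} applies directly to give that $G$ is a metrizable topological group -- and the role of ``uncountable'' is to guarantee the image is not merely a countable discrete object for which the conclusion is trivial or vacuous. Making this reduction to Theorem~\ref{t7} airtight, rather than re-deriving inversion continuity by hand, is the step I would scrutinize most carefully.
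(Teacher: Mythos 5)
Your proposal has a genuine gap at its central step. You claim that pseudo-openness of $f\colon M\to G$ lets you push a countable base $\{B_n\}$ at a single point $x\in f^{-1}(e)$ forward to a countable weak base at $e$. This fails for two reasons. First, pseudo-openness only guarantees $e\in\operatorname{int}f(U)$ when $U$ contains the \emph{whole fibre} $f^{-1}(e)$, not a single point of it, so the sets $f(B_n)$ carry no such information. Second, and decisively, the sequential fan $S_{\omega}$ is a pseudo-open (indeed closed) image of a locally compact separable metric space, is Fr\'{e}chet--Urysohn, and is \emph{not} weakly first-countable at its vertex; so no argument using only the topology of a pseudo-open image of a separable metric space can yield weak first-countability. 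The group structure must intervene precisely to exclude a closed copy of $S_{\omega}$, and this is where the paper's actual mechanism lives: it forms $T=\{(g,g^{-1})\}\subset G\times H$ (with $H$ the group with reversed multiplication), which by \cite[Proposition 2.9]{AR2005} is a \emph{closed topological subgroup} with a countable network; uncountability of $G$ then forces a nontrivial convergent sequence in $T$, i.e.\ a sequence $g_n\to e$ with $g_n^{-1}\to e$ (property ($^{\star\star}$)). Combined with Fr\'{e}chet--Urysohn-ness (no closed copy of $S_2$) and Theorem~\ref{t2}, this rules out closed copies of $S_{\omega}$, and only then does Lin's result (sequential $+$ point-countable $k$-network $+$ no closed $S_{\omega}$ $\Rightarrow$ weakly first-countable) plus Lemma~\ref{l6} give first-countability. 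Your reading of the uncountability hypothesis as merely excluding ``trivial countable cases'' misses that it is the engine producing the convergent sequence with convergent inverses.

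Your proposed endgame is also off target. You plan to upgrade $G$ to a topological group (via Lemma~\ref{l1} or a reduction to Theorem~\ref{t7}) and then invoke Arhangel'ski\v{\i}'s result, but pseudo-open images of separable metric spaces need not be Baire (consider $\mathbb{Q}$) nor quasi-developable (consider $S_{\omega}$ again), so Theorem~\ref{t7} does not apply, and no route to continuity of inversion is actually supplied. The paper sidesteps this entirely: once $G$ is first-countable it is an $\aleph_0$-space (countable $k$-network from the separable metric domain) that is first-countable, hence separable metrizable by \cite[Theorem 11.4(ii)]{Gr} --- no passage to a topological group is needed.
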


\begin{proof}
We introduce a new product operation in the topological space $G$ by the formula: $a\times b=ba$, for $a, b\in G$ and denote the space with this operation by $H$. Put $T=\{(g, g^{-1})\in G\times H, g\in G\}$. $|T| > \omega$ since $G$ is uncountable. By \cite[Proposition 2.9]{AR2005}, $H$ is a paratopological group and $T$ is closed in the space $G\times H$ and is a topological group.

Since $G$ is a pseusdo-open image of a separable metric space, then the space $G$ is a Fr\'echet-Urysohn space with a countable k-network. $G\times H$ has a countable network. Hence $T$ has a countable network. By the proof of \cite[Theorem 4.9]{Gr}, $T$ is a one-to-one continuous image of a separable metric space $M$.  Let $D$ be a countable dense subset of $M$, there is a sequence $L\subset D$ converging to some point in $M\setminus D$ since $M$ is uncountable. Therefore, there is a non-trivial sequence $\{(g_n, g_n^{-1}): n\in \mathbb{N}\}$ converging to $(e, e)$ (note that $T$ is homogeneous), and hence there exists a sequence $C_0=\{g_n: n\in \mathbb{N}\} \subset G$ converging to $e$ and its inverse $C_1=\{g_n^{-1}: n\in \mathbb{N}\}$ also converges to $e$.
$G$ contains no closed copy of $S_2$ since $G$ is Fr\'echet-Urysohn. By \cite[Theorem 2.4]{LC}, $G$ contains no closed copy of $S_\omega$. Since $G$ is a sequential space with a point-countable k-network and contains no closed copy of $S_\omega$, then $G$ is weakly first-countable \cite{Ls7}, and hence $G$ is first-countable by Lemma ~\ref{l6}. Therefore $G$ is separable and metrizable \cite[Theorem 11.4(ii)]{Gr}.
\end{proof}

A quotient image of a topological sum of
countably many compact spaces is called a $k_{\omega}$-space. Every countable $k_{\omega}$-space is a
sequential $\aleph_{0}$-space, and a product of two $k_{\omega}$-spaces is itself a $k_{\omega}$-space, see \cite{MA}.

\begin{theorem}\label{t6}
Let $G$ be a regular countable, locally $k_{\omega}$, paratopological group. Then $G$ is a discrete topological group or contains a closed copy of $S_{\omega}$.
\end{theorem}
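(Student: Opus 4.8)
The plan is to assume that $G$ is not discrete and derive that $G$ must contain a closed copy of $S_\omega$. Since $G$ is a regular countable locally $k_\omega$-paratopological group, there is an open neighborhood $U$ of $e$ that is a $k_\omega$-space. Because $G$ is countable, $U$ is a countable $k_\omega$-space, hence (by the remark preceding the theorem) a sequential $\aleph_0$-space. I would first argue that the whole group $G$ inherits a nice structure from this local $k_\omega$-property: using left translations, which are homeomorphisms, one covers $G$ by countably many translates of $U$, so $G$ itself is a countable sequential $\aleph_0$-space, and in particular has a countable $k$-network. The key structural dichotomy to exploit is the classical fact that a countable sequential space with a point-countable (here countable) $k$-network either is first-countable at every point or contains a closed copy of $S_\omega$ or $S_2$. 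So the heart of the argument is to rule out the ``first-countable everywhere'' alternative under the hypothesis that $G$ is not discrete.

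\textbf{The main line of argument.} Suppose $G$ contains no closed copy of $S_\omega$. I would combine this with the earlier results in the paper to force first-countability. By \cite[Theorem 2.4]{LC} (invoked analogously in the proof of Theorem~\ref{t8}), the absence of a closed copy of $S_\omega$ in a suitable sequential paratopological group prevents a closed copy of $S_2$ as well, so $G$ contains neither $S_2$ nor $S_\omega$ as a closed subspace. A countable sequential space with a countable $k$-network that excludes closed copies of both $S_2$ and $S_\omega$ is weakly first-countable (this is the cited result of \cite{Ls7} used in Theorem~\ref{t8}). Then Lemma~\ref{l6} upgrades weak first-countability to genuine first-countability, since $G$ is a Hausdorff paratopological group. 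Thus, under the assumption ``no closed copy of $S_\omega$,'' the group $G$ is a countable first-countable regular paratopological group.

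\textbf{Closing the dichotomy.} It remains to show that a countable, first-countable, regular, locally $k_\omega$ paratopological group is a discrete topological group. Here I would use the local $k_\omega$-hypothesis decisively: a first-countable $k_\omega$-space (equivalently, a first-countable countable $k_\omega$-neighborhood) must be locally compact and metrizable on $U$, because a $k_\omega$-space that is first-countable at a point is locally compact there; since $U$ is countable and locally compact, it contains an isolated point, and by homogeneity via translations every point of $G$ is isolated, so $G$ is discrete. A discrete paratopological group is automatically a topological group (the inverse map on a discrete space is continuous). This yields the desired conclusion that $G$ is a discrete topological group.

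\textbf{Expected main obstacle.} The delicate step is the transition from the local $k_\omega$-assumption to a global conclusion and, in particular, justifying that a first-countable point in the countable $k_\omega$-neighborhood $U$ forces local compactness and hence an isolated point. The interplay between the $k_\omega$-decomposition of $U$ (as a quotient of a countable sum of compacta) and first-countability is where one must be careful: one needs that a countable first-countable $k_\omega$-space is locally compact, so that countability forces isolated points. Getting the homogeneity argument to propagate isolatedness across all of $G$ from the single neighborhood $U$, using that translations are homeomorphisms of the whole group, is the technical crux; once an isolated point is located in $U$, discreteness of $G$ follows cleanly.
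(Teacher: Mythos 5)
There is a genuine gap in your main line of argument: you use \cite[Theorem 2.4]{LC} in the wrong direction. That result (Theorem~\ref{t2} in the paper) says that a paratopological group with property ($^{\star\star}$) that has a closed copy of $S_{\omega}$ also has a closed copy of $S_{2}$; equivalently, \emph{no closed $S_{2}$ implies no closed $S_{\omega}$} (this is how it is used in the proof of Theorem~\ref{t8}, where the absence of $S_2$ comes from the Fr\'echet--Urysohn property). You instead start from ``no closed copy of $S_{\omega}$'' and try to deduce ``no closed copy of $S_{2}$,'' which is the converse implication and is not provided by that theorem --- indeed, whether it holds without extra hypotheses is essentially the content of the open questions at the end of Section 6. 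Moreover, even the correct direction requires property ($^{\star\star}$), which you do not verify and which need not hold here (the paper explicitly notes that the countable group $\mathbb{Z}\times\mathbb{Z}$ in Theorem~\ref{t3} lacks it). Since Lin's criterion from \cite{Ls7} for weak first-countability needs the exclusion of $S_{2}$ as well as of a closed $S_{\omega}$, your derivation of first-countability collapses at this point. The paper avoids the whole issue by splitting on whether $G$ is an $\alpha_{4}$-space: if $G$ is $\alpha_{4}$, then (being sequential and $\aleph_{0}$ by local $k_{\omega}$-ness) it is weakly first-countable by \cite{Ls3}, hence first-countable by Lemma~\ref{l6}; if $G$ is not $\alpha_{4}$, a sequential non-$\alpha_{4}$ space contains a copy of $S_{\omega}$, which can be taken closed by \cite[Corollary 3.4]{Ls7} because every point of the countable regular space $G$ is a $G_{\delta}$-set. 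If you want to keep your dichotomy, you would need to replace the $S_{2}$ step by this $\alpha_{4}$ argument or by some other means of excluding $S_{2}$.

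Your ``closing the dichotomy'' step, by contrast, is correct and slightly different from the paper's: you observe that a first-countable $k_{\omega}$-space is locally compact, so a countable such neighborhood has an isolated point (Baire category), and homogeneity makes $G$ discrete. The paper instead notes that a countable first-countable regular group is separable metrizable, and if non-discrete it would be homeomorphic to $\mathbb{Q}$, which is not locally $k_{\omega}$. Both work; yours is arguably more elementary, but it does not repair the gap above.
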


\begin{proof}
Suppose that $G$ is an $\alpha_{4}$-space. Then since $G$ is locally $k_{\omega}$, $G$ is sequential and $\aleph_{0}$, and thus $G$ is weakly first-countable \cite{Ls3}. Then $G$ is first-countable by Lemma~\ref{l6}, and hence $G$ is a separable metrizable space since $G$ is countable. If $G$ is not discrete, $G$ has no isolated points. Then $G$ is homeomorphic to the rational  number set $\mathbb{Q}$ since a separable metrizable space is homeomorphic to the rational number set $\mathbb{Q}$ provided that it is infinite, countable and without any isolated points \cite{HP}. However, $\mathbb{Q}$ is not a locally $k_{\omega}$-space, which is a contradiction. Then $G$ is discrete, and $G$ is  a topological group.

If $G$ is not an $\alpha_{4}$-space, and thus $G$ contains a copy of $S_{\omega}$. Since every point of $G$ is a $G_{\delta}$-set, it follows from \cite[Corollary 3.4]{Ls7} that $G$ contains a closed copy of $S_{\omega}$.
\end{proof}

By Theorem~\ref{t6}, it is easy to obtain the following corollary.

\begin{corollary}
Let $G$ be a regular, countable, non-discrete, Fr\'{e}chet-Urysohn\footnote{A space $X$ is said to be {\it Fr$\acute{e}$chet-Urysohn} if, for
each $x\in \overline{A}\subset X$, there exists a sequence
$\{x_{n}\}$ such that $\{x_{n}\}$ converges to $x$ and $\{x_{n}:
n\in\mathbb{N}\}\subset A$.} paratopological group. If $G$ is $k_{\omega}$, then $G$ contains a closed copy of $S_{\omega}$ and no closed copy of $S_{2}$.
\end{corollary}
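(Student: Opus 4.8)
The plan is to apply Theorem~\ref{t6} directly, after observing that the hypothesis ``$G$ is $k_{\omega}$'' is strictly stronger than ``$G$ is locally $k_{\omega}$.'' First I would note that every $k_{\omega}$-space is automatically locally $k_{\omega}$, since the whole space $G$ is an open $k_{\omega}$-neighborhood of each of its points. Thus $G$ meets every hypothesis of Theorem~\ref{t6}, which asserts that $G$ is either a discrete topological group or contains a closed copy of $S_{\omega}$. As $G$ is assumed non-discrete, the first alternative is excluded, and we conclude at once that $G$ contains a closed copy of $S_{\omega}$.

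It then remains to show that $G$ has no closed copy of $S_{2}$, and here I would invoke the Fr\'{e}chet-Urysohn hypothesis. The key point is that the Arens space $S_{2}$ is \emph{not} Fr\'{e}chet-Urysohn: the point $\infty$ lies in the closure of the set $\{x_{n}(m): m, n\in\mathbb{N}\}$ of isolated points, yet no sequence drawn from this set converges to $\infty$. Indeed, from the explicit neighborhood base in the definition of $S_{2}$, any candidate sequence would either be eventually contained in finitely many ``columns'' $\{x_{n}(m): m\in\mathbb{N}\}$, in which case it clusters at the corresponding $x_{n}$ rather than at $\infty$, or it would fail to enter some basic neighborhood of $\infty$ cofinally. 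Since the Fr\'{e}chet-Urysohn property is inherited by arbitrary subspaces, and in particular by closed subspaces, $G$ can contain no (closed) copy of $S_{2}$. Combining the two parts yields the corollary.

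This argument is essentially bookkeeping layered on top of Theorem~\ref{t6}, so I expect no genuine obstacle, which is consistent with the author's remark that the corollary is easy to obtain. The only step requiring any care is the verification that no sequence of isolated points of $S_{2}$ converges to $\infty$ — the fact underlying the failure of the Fr\'{e}chet-Urysohn property for $S_{2}$ — but this is a routine computation from the neighborhood base supplied in the definition, and it is entirely standard.
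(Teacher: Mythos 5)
Your proposal is correct and follows exactly the route the paper intends: the paper offers no written proof beyond the remark that the corollary follows from Theorem~\ref{t6}, and your two steps ($k_{\omega}$ implies locally $k_{\omega}$, so Theorem~\ref{t6} plus non-discreteness gives the closed copy of $S_{\omega}$; and $S_{2}$ is not Fr\'{e}chet-Urysohn while that property is hereditary, so no copy of $S_{2}$ can occur) are precisely the intended bookkeeping, the second step being the same observation the authors use in the proof of Theorem~\ref{t8}.
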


Since the closed image of a locally compact, separable metric space is a Fr\'{e}chet-Urysohn and  $k_{\omega}$-space, we have the following corollary.

\begin{corollary}
Let $G$ be a countable non-discrete paratopological group. If $G$ is a closed image of a locally compact, separable metric space, then $G$ contains a closed copy of $S_{\omega}$, and hence $G$ is not metrizable.
\end{corollary}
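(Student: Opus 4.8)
The plan is to reduce the statement to the preceding corollary (the one on regular, countable, non-discrete, Fréchet-Urysohn, $k_{\omega}$-paratopological groups) by verifying that a countable non-discrete paratopological group $G$ which is a closed image of a locally compact, separable metric space $M$ satisfies all of its hypotheses, and then to deduce non-metrizability from the presence of a closed copy of $S_{\omega}$. Write $f\colon M\to G$ for the closed continuous surjection. Since $G$ is given to be countable and non-discrete, it remains to check that $G$ is regular, Fréchet-Urysohn, and $k_{\omega}$.

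For the $k_{\omega}$ property, I would first observe that $M$, being locally compact, separable, and metric, is Lindel\"of and hence $\sigma$-compact, so one may write $M=\bigcup_{n}K_{n}$ with each $K_{n}$ compact; this makes $M$ itself a $k_{\omega}$-space with defining compact cover $\{K_{n}\}$. Since a closed map is a quotient map, $f$ is a quotient map, and the composite of the canonical quotient $\bigoplus_{n}K_{n}\to M$ with $f$ exhibits $G$ as a quotient image of a countable topological sum of the compacta $\{f(K_{n})\}$. Hence $G$ is $k_{\omega}$.

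For the remaining hypotheses, I would use the classical fact that a closed continuous image of a metrizable space is Fréchet-Urysohn, which gives at once that $G$ is Fréchet-Urysohn. Regularity I would obtain in two pieces: first, because $f$ is closed and each singleton of $M$ is closed, the image $f(\{x\})=\{f(x)\}$ is closed in $G$, so $G$ is $T_{1}$; second, $M$ is normal and the closed continuous image of a normal space is normal, so $G$ is normal, and a normal $T_{1}$ space is regular. With all hypotheses verified, the preceding corollary applies and yields that $G$ contains a closed copy of $S_{\omega}$. Finally, since a closed subspace of a metrizable space is metrizable while $S_{\omega}$ is not first-countable (and therefore not metrizable), the group $G$ cannot be metrizable.

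The routine preservation facts above carry the whole argument, so the only places demanding care are the bookkeeping for the $k_{\omega}$ reduction --- namely confirming that the composition $\bigoplus_{n}K_{n}\to M\to G$ is genuinely a quotient map and that $\{f(K_{n})\}$ serves as a defining compact cover --- together with the invocation of the standard theorem that closed images of metric spaces are Fréchet-Urysohn. I expect no substantive obstacle beyond correctly citing these preservation theorems, since the genuinely group-theoretic content (extracting a \emph{closed} copy of $S_{\omega}$) has already been isolated in the preceding corollary.
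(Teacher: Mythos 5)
Your proposal is correct and follows exactly the route the paper intends: the paper's entire justification is the one-line remark that a closed image of a locally compact, separable metric space is Fr\'echet-Urysohn and $k_{\omega}$, after which the preceding corollary applies. Your write-up merely fills in the standard preservation facts (quotient composition for $k_{\omega}$, hereditarily-quotient images of metric spaces being Fr\'echet-Urysohn, normality plus $T_{1}$ for regularity) and the observation that $S_{\omega}$ is not first-countable, all of which the paper leaves implicit.
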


The condition ``locally $k_{\omega}$-space'' is essential in Theorem~\ref{t6}, and we can not omit it.

\begin{example}\label{e0}
There exists a regular non-discrete countable second-countable paratopological group $G$ such that $G$ contains no closed copy of $S_{\omega}$ and $G$ is not a topological group.
\end{example}

\begin{proof}
Let $G=\mathbb{Q}$ be the rational number and endow with the subspace topology of the Sorgenfrey line. Then $\mathbb{Q}$ is a second-countable paratopological group and non-discrete. Obviously, $G$ contains no closed copy of $S_{\omega}$ and $G$ is not a topological group.
\end{proof}
\bigskip

\section{non-H-closed paratopological groups}
A paratopological group is H-closed if it is closed in every Hausdorff paratopological
group containing it as a subgroup.

Let $U$ be a neighborhood of $e$ in a paratopological group $G$. We say that a
subset $A\subset G$ is $U$-unbounded if $A\nsubseteq KU$ for every finite subset $K\subset G$.

Now, we give a negative answer to Question~\ref{q5}.

\begin{lemma}\cite{RO}\label{l5}
Let $G$ be an abelian paratopological group of the infinite exponent.
If there exists a neighborhood $U$ of the neutral element such that a group $nG$ is $UU^{-1}$-unbounded for
every $n\in\mathbb{N}$, then the paratopological group $G$ is not H-closed.
\end{lemma}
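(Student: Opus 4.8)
The plan is to use the definition of H-closedness directly in its negative form: $G$ fails to be H-closed as soon as there exists \emph{one} Hausdorff paratopological group $H$ containing $G$ as a subgroup in which $G$ is not closed. So the whole proof reduces to constructing a single well-chosen overgroup $H$ together with a point of $\overline{G}\setminus G$, and the two hypotheses should be exactly the resources that make this possible. Intuitively, $G$ must possess a ``missing limit'': a net in $G$ that escapes to infinity yet ought to converge once one more point is available, and the unboundedness hypothesis is precisely what guarantees such a net is not secretly captured inside $G$.

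Concretely, I would take the underlying group of $H$ to be the abelian group $G\times\mathbb{Z}$, identify $G$ with the subgroup $G\times\{0\}$, and let $p=(e,1)$ generate the adjoined $\mathbb{Z}$-direction. I then define a topology on $H$ by prescribing a neighbourhood base at the identity $(e,0)$ and checking the Pontryagin conditions (a)--(d) of Proposition~\ref{p0}; the base is engineered so that $p$ becomes a limit of a net $(g_{\alpha},0)$ coming from $G\times\{0\}$, which forces $p\in\overline{G\times\{0\}}$ and hence makes $G$ non-closed in $H$. The point of the full hypothesis is visible here: since the power map is continuous, $(g_{\alpha},0)\to p$ entails $(g_{\alpha}^{n},0)\to p^{n}=(e,n)$, and the approximating entries $g_{\alpha}^{n}$ lie in $nG$; for each $p^{n}$ to be a genuinely new limit point rather than collapsing onto a point of $G$, the net $(g_{\alpha}^{n})$ must actually run off to infinity, and this is guaranteed exactly by $nG$ being $UU^{-1}$-unbounded. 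This is why the hypothesis must quantify over every $n\in\mathbb{N}$, and the infinite exponent (which the unboundedness hypothesis already forces, since a finite exponent $m$ would make $mG=\{e\}$ bounded) ensures that the new $\mathbb{Z}$-direction does not degenerate.

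After $H$ is built, three things remain to be verified: that $G\times\{0\}$ inherits exactly its original topology (so that $H$ really extends $G$), that $p\in\overline{G\times\{0\}}\setminus(G\times\{0\})$ (so that $G$ is not closed), and that $H$ is a Hausdorff paratopological group, i.e.\ that the Pontryagin conditions together with the separation condition (e) of Proposition~\ref{p0} hold. I expect the genuine obstacle to be the simultaneous reconciliation of joint continuity with the Hausdorff property. These pull in opposite directions: the basic neighbourhoods of $(e,0)$ must be large enough to reach the escaping net (to pull $p$ into the closure of $G$), yet they must still satisfy the squaring condition (b) $V^{2}\subset U$ and separate distinct points for (e). Iterating (b) pushes one through all the powers $p^{n}$ and hence through all the subgroups $nG$, so controlling the construction requires the unboundedness of $nG$ at \emph{every} level at once — mirroring the quantifier in the hypothesis. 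Once these verifications succeed, $G$ sits as a non-closed subgroup of the Hausdorff paratopological group $H$, and therefore $G$ is not H-closed.
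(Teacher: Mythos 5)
A preliminary remark: the paper gives no proof of this lemma at all --- it is quoted from Ravsky \cite{RO} and used as a black box to show the Sorgenfrey line is not H-closed --- so the only in-paper material to compare with is the closely related construction in Theorem~\ref{t3}, which imports the same machinery from \cite[Lemma 3]{RO}: a paratopological group topology on $G\times\mathbb{Z}$ whose base at the identity consists of the sets $A[n_k]=\bigcup_{l}A^{+}_{n_1}\cdots A^{+}_{n_l}$ with $A^{+}_{n}=\{(e,0)\}\cup\{(a_k,1):k>n\}$ for a carefully chosen sequence $(a_n)$ in $G$. Your plan --- embed $G$ as $G\times\{0\}$ in $G\times\mathbb{Z}$ and rig a Hausdorff paratopological group topology so that a net from $G\times\{0\}$ converges to a point outside $G\times\{0\}$ --- is exactly the architecture of that construction, so the approach is the right one.

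The gap is that the plan is never executed, and the single place where the hypothesis must actually do work is precisely the step you defer as ``engineering.'' Everything hinges on the inductive selection of the sequence $(a_n)$: one must choose $a_n$ inside a prescribed subgroup of the form $nG$ (so that the powers of the adjoined generator interact correctly with the subgroups $nG$) while forcing $a_n$ to escape $K\,UU^{-1}$ for a finite set $K$ assembled from products of the previously chosen elements --- this is what the sets $X(a_0,\dots,a_{n-1})$ introduced before Theorem~\ref{t3} are for --- and the $UU^{-1}$-unboundedness of every $nG$ is invoked at exactly this point to guarantee that such an $a_n$ exists at each stage. Without this selection you cannot write down the base, cannot verify that the subspace topology on $G\times\{0\}$ is the original $\tau$, and cannot verify condition (e) of Proposition~\ref{p0}; these are the three verifications you correctly list but do not perform, and they are where all the content of the lemma lives. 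Your heuristic via continuity of the power map explains why the quantifier over all $n$ is plausible, but it is not how the hypothesis is actually consumed. As it stands the proposal is a sound blueprint with the load-bearing construction missing.
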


\begin{theorem}\label{t1}
The Sorgenfrey line $(\mathbb{R}, \tau)$ is not H-closed.
\end{theorem}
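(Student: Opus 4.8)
The plan is to apply Lemma~\ref{l5} directly to the Sorgenfrey line. First I would verify that $(\mathbb{R}, \tau)$ satisfies the hypotheses of the lemma: it is an abelian paratopological group (under usual addition, written additively so that inverses are negations and the neutral element is $0$), and it has infinite exponent, since no nonzero integer $n$ annihilates every real number—indeed $n \cdot x = nx \neq 0$ for any $x \neq 0$. Then the work reduces to exhibiting a single neighborhood $U$ of $0$ for which the subgroup $n\mathbb{R}$ is $(U + (-U))$-unbounded for every $n \in \mathbb{N}$, where I write the group operation additively so that $UU^{-1}$ in the lemma becomes $U - U = \{u - v : u, v \in U\}$.

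The natural choice is the basic Sorgenfrey neighborhood $U = [0, 1)$. The key computation is to identify $U - U$: since $U = [0,1)$, the difference set is $U - U = (-1, 1)$. The crucial observation is that this difference set is \emph{bounded} in the ordinary metric sense, so I would next show that each subgroup $n\mathbb{R}$ fails to be covered by finitely many translates of it. Since $n \geq 1$, the subgroup $n\mathbb{R}$ equals all of $\mathbb{R}$ (as $\mathbb{R}$ is divisible and $n \neq 0$), so I must show that $\mathbb{R}$ cannot be written as $K + (U - U)$ for any finite $K \subset \mathbb{R}$. This is immediate: a finite union $K + (-1,1)$ of bounded intervals is bounded, hence cannot exhaust the unbounded set $\mathbb{R}$. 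Thus $n\mathbb{R} = \mathbb{R}$ is $(U-U)$-unbounded for every $n$, and Lemma~\ref{l5} yields that $(\mathbb{R},\tau)$ is not H-closed.

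I would double-check the one subtle point, namely that the topology on $\mathbb{R}$ is the Sorgenfrey topology rather than the Euclidean one, and that this does not interfere with the argument. The $U$-unboundedness condition in Lemma~\ref{l5} is purely a statement about the algebraic covering of subgroups by translates of the set $U - U$, and the size and unboundedness of $U - U = (-1,1)$ as a subset of $\mathbb{R}$ is independent of which topology we place on $\mathbb{R}$; what matters topologically is only that $U = [0,1)$ is a genuine neighborhood of $0$ in the Sorgenfrey topology, which it is. Hence the only properties of the Sorgenfrey line that enter are that it is a Hausdorff abelian paratopological group (established in Example~1.2.2 of \cite{E1989}) and that $[0,1)$ is an open neighborhood of the identity.

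I do not anticipate a serious obstacle here, since the real content has been packaged into Lemma~\ref{l5}; the proof is essentially a verification that the lemma applies. The mild point requiring care is translating between the multiplicative notation of the lemma (with $UU^{-1}$) and the additive notation appropriate to $\mathbb{R}$, and confirming that $n\mathbb{R} = \mathbb{R}$ for all $n \geq 1$ so that the unboundedness need only be checked for the full line rather than for a proper subgroup. Once the neighborhood $U = [0,1)$ is fixed and $U - U = (-1,1)$ is computed, the boundedness-versus-unboundedness contradiction finishes the argument in one line.
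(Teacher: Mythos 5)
Your proposal is correct and follows essentially the same route as the paper: apply Lemma~\ref{l5} with $U=[0,1)$, compute $U-U=(-1,1)$, and observe that $n\mathbb{R}=\mathbb{R}$ cannot be covered by finitely many translates of a bounded interval. Your write-up is in fact somewhat cleaner than the paper's, which leaves the divisibility observation $n\mathbb{R}=\mathbb{R}$ implicit and states the boundedness argument a bit awkwardly.
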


\begin{proof}
Obvious, $\mathbb{R}$ is an abelian paratopological group of the infinite exponent. Let $U=[0, 1)$. Then $UU^{-1}=(-1, 1)$ For each $n\in\mathbb{N}$, it is easy to see that $n\mathbb{R}\nsubseteq K(-1, 1)$ for every finite subset $K$ of $\mathbb{R}$. Indeed, for each finite subset $K$ of $\mathbb{R}$, since $K$ is finite set, there exists an $n\in \mathbb{N}$ such that $|x|\leq n$ for each $x\in K$, and then $K\subset (-1, 1)\subset (-n, n)$. Therefore, we have $n\mathbb{R}\nsubseteq K(-1, 1)$. By Lemma~\ref{l5}, $\mathbb{R}$ is not H-closed.
\end{proof}

However, we have the following theorem.

\begin{theorem}\label{t4}
Let $(\mathbb{R}, \tau)$ be the Sorgenfrey line. Then the quotient group $(\mathbb{R}/\mathbb{Z}, \xi)$ is H-closed, where $\xi$ is the quotient topology.
\end{theorem}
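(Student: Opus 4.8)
The plan is to exploit the fact that, although the Sorgenfrey circle $(\mathbb{R}/\mathbb{Z},\xi)$ is very far from compact, its \emph{symmetrization} is the ordinary Euclidean circle group, which \emph{is} compact; this compactness is precisely what the Sorgenfrey line lacks, and it is what will force closedness. Write $C=(\mathbb{R}/\mathbb{Z},\xi)$, and note that $\xi$ is generated by the half-open arcs, since the quotient map $q\colon(\mathbb{R},\tau)\to\mathbb{R}/\mathbb{Z}$ is open. For any paratopological group topology $\mu$ on a group, let $\mu^{-1}=\{U^{-1}:U\in\mu\}$ be the conjugate topology and $\mu^{\vee}=\mu\vee\mu^{-1}$ the topology they jointly generate; it is a standard fact that $(G,\mu^{\vee})$ is then a topological group and that $\mu\subset\mu^{\vee}$. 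I would begin with the one concrete computation on which everything rests: $\xi^{\vee}=\xi\vee\xi^{-1}$ is the usual topology of the circle. Indeed $\xi^{-1}$ is generated by the left half-open arcs $(a,b]$, and an intersection $[a,b)\cap(c,d]$ of a $\xi$-basic set with a $\xi^{-1}$-basic set is a Euclidean open arc; conversely every Euclidean arc is both $\xi$- and $\xi^{-1}$-open. Hence $(C,\xi^{\vee})$ is the compact circle group $\mathbb{T}$.

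Next I would take an arbitrary Hausdorff paratopological group $(K,\sigma)$ containing $C$ as a paratopological subgroup, so that $\sigma|_{C}=\xi$, and form $\sigma^{\vee}=\sigma\vee\sigma^{-1}$, a Hausdorff (since $\sigma\subset\sigma^{\vee}$ with $\sigma$ Hausdorff) topological group topology on $K$. Because $C$ is a subgroup we have $C^{-1}=C$, so restriction commutes with the conjugate operation: $\sigma^{-1}|_{C}=(\sigma|_{C})^{-1}=\xi^{-1}$, whence $\sigma^{\vee}|_{C}=\xi\vee\xi^{-1}=\xi^{\vee}$. Thus $(C,\sigma^{\vee}|_{C})$ is homeomorphic to the compact space $\mathbb{T}$, and in particular $C$ is $\sigma^{\vee}$-compact, hence $\sigma^{\vee}$-closed in the Hausdorff space $(K,\sigma^{\vee})$.

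The heart of the argument is to pass from closedness in the fine topology $\sigma^{\vee}$ to closedness in the coarse topology $\sigma$ that H-closedness actually concerns. Let $g\in\overline{C}^{\,\sigma}$ and choose a net $(c_{\alpha})$ in $C$ with $c_{\alpha}\to g$ in $(K,\sigma)$. Since $(C,\sigma^{\vee}|_{C})$ is compact, the net $(c_{\alpha})$ has a subnet that $\sigma^{\vee}|_{C}$-converges to some $c\in C$; as $\sigma\subset\sigma^{\vee}$, this subnet also $\sigma$-converges to $c$. But a subnet of a $\sigma$-convergent net retains the limit $g$, and $(K,\sigma)$ is Hausdorff, so $g=c\in C$. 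Therefore $\overline{C}^{\,\sigma}=C$, i.e.\ $C$ is closed in $K$; as $K$ was arbitrary, $(\mathbb{R}/\mathbb{Z},\xi)$ is H-closed.

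I expect the main obstacle to be conceptual rather than computational: compactness is available only for the symmetrized topology $\sigma^{\vee}$, whereas H-closedness demands closedness for the weaker $\sigma$. The net/subnet argument of the previous paragraph, together with uniqueness of limits in the Hausdorff space $(K,\sigma)$, is exactly the bridge across this gap, and it is instructive to contrast it with Theorem~\ref{t1}: for the Sorgenfrey line the analogous symmetrization is the \emph{non-compact} Euclidean line, so the bridging step has nothing to stand on, which is consistent with the line failing to be H-closed. I would also take care to invoke (rather than reprove) the standard result that $\mu\vee\mu^{-1}$ is a topological group topology, and to note that this does not require $K$ to be abelian, even though $C$ itself is.
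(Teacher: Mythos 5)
Your proof has a fatal error at its very first and most concrete step: the claim that $\xi^{\vee}=\xi\vee\xi^{-1}$ is the Euclidean circle topology. The join of the Sorgenfrey (lower-limit) topology with its conjugate upper-limit topology is the \emph{discrete} topology, not the Euclidean one: for $c<a<b$ one has $[a,b)\cap(c,a]=\{a\}$, so every singleton is $\xi^{\vee}$-open. Your computation of $[a,b)\cap(c,d]$ as ``a Euclidean open arc'' is simply wrong -- such intersections are half-open or closed arcs or singletons, never nonempty open arcs. (Your observation that every Euclidean arc is both $\xi$- and $\xi^{-1}$-open only shows the Euclidean topology is \emph{coarser} than both, i.e.\ it sits below the meet, not at the join.) Consequently $(C,\xi^{\vee})$ is an uncountable discrete group, which is not compact, and the compactness on which the entire remainder of your argument rests (closedness of $C$ in $(K,\sigma^{\vee})$ and the subnet bridge back to $\sigma$) is unavailable. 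The later steps are internally coherent given the false premise -- indeed, if $(C,\sigma^{\vee}|_{C})$ really were compact, then $(C,\sigma|_{C})$ would already be compact and closedness in the Hausdorff space $(K,\sigma)$ would be immediate, so the ``bridge'' you emphasize would be trivial -- but the premise fails.

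Your instinct to relate the Sorgenfrey circle to the compact Euclidean circle is the right one; you have just gone in the wrong direction. The paper passes to the finest \emph{group} topology $\sigma$ with $\sigma\subset\xi$ (a coarsening, namely the usual compact circle topology), observes that a compact Hausdorff topological group is H-closed, and then invokes Proposition 10 of Ravsky's paper \cite{RO} to transfer H-closedness from this coarser group topology back up to $(\mathbb{R}/\mathbb{Z},\xi)$. That transfer principle is exactly the nontrivial bridge your net argument was trying to supply by hand, and it cannot be replaced by a symmetrization upward, since $\mu\vee\mu^{-1}$ is finer than $\mu$ and for Sorgenfrey-type topologies destroys all compactness. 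To repair your proof you would need to either quote Ravsky's transfer result as the paper does, or prove directly that closedness of $C$ with respect to the coarser compact group topology forces $\sigma$-closedness in every Hausdorff paratopological supergroup -- which is genuinely nontrivial and is not established anywhere in your argument.
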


\begin{proof}
Let $(\mathbb{R}/\mathbb{Z}, \sigma)$ be the finest group topology such that $\sigma\subset \xi$. Then $(\mathbb{R}/\mathbb{Z}, \sigma)$ is compact. Therefore, $(\mathbb{R}/\mathbb{Z}, \sigma)$ is H-closed. Hence $(\mathbb{R}/\mathbb{Z}, \xi)$ is H-closed by \cite[Proposition 10]{RO}.
\end{proof}

Let $G$ be an abelian non-periodic  paratopological group. We say that $G$ is {\it strongly unbounded} if there exists non-periodic element $x_{0}$ and open neighborhood $U$ of $e$ such that $\langle x_{0}\rangle\cap UU^{-1}=\{e\}$ and $\langle x_{0}\rangle$ is closed in $G$, where $\langle x_{0}\rangle$ is a subgroup generated by $x_0$. Obviously, every strongly unbounded paratopological group is not H-closed.

Next, we discuss some non-H-closed paratopological groups.

Given any elements $a_{0}, a_{1},\cdots, a_{n}$ of an abelian group $G$ put
$$X(a_{0}, a_{1},\cdots, a_{n})=\{a_{0}^{x_{0}}a_{1}^{x_{1}}\cdots a_{n}^{x_{n}}: 0\leq x_{i}\leq n, 0\leq i\leq n\}.$$

\begin{theorem}\label{t3}
Let $(G, \tau)$ be a Hausdorff strongly unbounded paratopological group. Then there exists a Tychonoff paratopological group topology $\gamma$ on $G\times \mathbb{Z}$ satisfies the following conditions:
\begin{enumerate}
\item There exists a Hausdorff paratopological group topology $\sigma\subset \gamma$ on $G\times \mathbb{Z}$  such that $\sigma|_{G\times\{0\}}=\tau$ and $(G\times \mathbb{Z}, \sigma)$ contains closed copies of $S_{2}$ and $S_{\omega}$;

\item $(G\times \mathbb{Z}, \gamma)$ is a  strongly zero-dimensional, paracompact $\sigma$-space;

\item The remainder $b(G\times \mathbb{Z}, \gamma)\setminus (G\times \mathbb{Z}, \gamma)$ of every Hausdorff compactification $b(G\times \mathbb{Z}, \gamma)$ is pseudocompact.
 \end{enumerate}
\end{theorem}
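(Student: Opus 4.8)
The plan is to build the topology $\gamma$ on $G \times \mathbb{Z}$ by interlacing the given topology $\tau$ on the $G$-factor with a carefully chosen neighborhood structure along the $\mathbb{Z}$-direction, using the strong unboundedness witness to engineer both the embedded copies of $S_2, S_\omega$ (for part (1)) and the refined, well-behaved topology (for parts (2) and (3)). First I would fix the non-periodic element $x_0$ and the open neighborhood $U$ of $e$ with $\langle x_0 \rangle \cap UU^{-1} = \{e\}$ and $\langle x_0 \rangle$ closed in $G$. The element $x_0$ generates an infinite cyclic copy of $\mathbb{Z}$ sitting inside $G$ as a closed, topologically discrete (by the condition $\langle x_0\rangle\cap UU^{-1}=\{e\}$) subgroup; this discreteness is precisely what lets me attach convergent sequences indexed by $\mathbb{Z}$ to manufacture the $S_\omega$ and $S_2$ copies. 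I would define $\sigma$ by prescribing a base at the neutral element $(e, 0)$ whose members are of the form $V_n = (W_n \times \{0\}) \cup \bigl(\bigcup_{k \geq n} \{x_0^k\} \times \{k\}\bigr)$-type sets, where $\{W_n\}$ is a base at $e$ in $(G,\tau)$, chosen so that the $S_2/S_\omega$ convergence is visible while $\sigma|_{G \times \{0\}} = \tau$ is preserved; verifying the Pontrjagin conditions (Proposition~\ref{p0}(a)--(d)) for this base is the routine bookkeeping that certifies $\sigma$ is a paratopological group topology, and condition (e) together with the closedness of $\langle x_0\rangle$ gives Hausdorffness and the \emph{closedness} of the two embedded copies.

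For part (1) the heart is choosing the base so that the countably many convergent sequences $x_0^k \to (e,0)$ (as $k \to \infty$, along the $\mathbb{Z}$-coordinate) fan out into a copy of $S_\omega$ when all their limits are identified, and the diagonal-style enumeration yields an $S_2$ as well; the closedness of these copies follows because $\langle x_0\rangle$ is closed and the added points live on distinct $\mathbb{Z}$-levels, so no stray accumulation occurs. For part (2) I would take $\gamma$ to be a refinement of $\sigma$ that \emph{separates} the $\mathbb{Z}$-levels, e.g. declaring each slab $G \times \{k\}$ relatively $\tau$-open; this makes $(G \times \mathbb{Z}, \gamma)$ a topological sum (or near-sum) of countably many copies of $(G, \tau')$ for an appropriate refinement $\tau'$ of $\tau$, which inherits the $\sigma$-space, paracompactness, and strong zero-dimensionality properties once one checks $(G,\tau)$ (or its relevant refinement) already carries a $\sigma$-discrete network and a clopen base at $e$ — the latter being available because $U$ and its translates, intersected with the discrete $\langle x_0\rangle$, furnish clopen separators.

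For part (3), the claim that the remainder $b(G \times \mathbb{Z}, \gamma) \setminus (G \times \mathbb{Z}, \gamma)$ is pseudocompact in \emph{every} Hausdorff compactification is the most delicate point and I expect it to be the main obstacle. The strategy would be to exploit strong unboundedness to show that $G \times \mathbb{Z}$ is ``locally compact at infinity'' in a suitable sense, or more precisely that every real-valued continuous function on the remainder is bounded; concretely I would argue that any infinite discrete family of open sets in the remainder would pull back, via the compactification, to a locally finite family of open sets in $G\times\mathbb{Z}$ escaping to infinity along the $UU^{-1}$-unbounded directions supplied by $\langle x_0\rangle$, and then use the strong-unboundedness hypothesis to derive a contradiction with the way $\gamma$ was constructed. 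The difficulty is that pseudocompactness of the remainder must hold for \emph{all} compactifications simultaneously, which typically forces one to show $G \times \mathbb{Z}$ is \emph{Čech-complete} or at least that it is a ``pseudocompact-remainder'' space in the sense that no $\sigma$-compact, locally compact, non-pseudocompact-remainder obstruction arises; I would aim to invoke a characterization of spaces with pseudocompact remainders (along the lines of the Henriksen--Isbell criterion, that $X$ has pseudocompact remainder in every compactification iff $X$ is ``locally compact and the set of non-isolated-at-infinity points is bounded''), reducing part (3) to verifying that the $\gamma$-topology makes $G \times \mathbb{Z}$ satisfy exactly that criterion via the discreteness of $\langle x_0 \rangle$ and the slab decomposition from part (2).
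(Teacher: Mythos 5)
Your overall architecture (exploit the discrete closed copy of $\mathbb{Z}=\langle x_0\rangle$ inside $G$, attach sequences along the $\mathbb{Z}$-levels to build $S_2$ and $S_\omega$) matches the paper's intent for part (1), but there are three concrete gaps. First, your proposed basic sets $V_n=(W_n\times\{0\})\cup\bigl(\bigcup_{k\ge n}\{x_0^k\}\times\{k\}\bigr)$ do not satisfy the Pontrjagin condition $V^2\subset U$: a product of an element of $W_n\times\{0\}$ with $(x_0^k,k)$ lands in $W_nx_0^k\times\{k\}$, which is not of the prescribed form, so ``routine bookkeeping'' does not close. The paper avoids this by following Ravsky's Lemma 3: it fixes a rapidly increasing sequence $a_n$ (with $a_n>2a$ for every $a\in X(a_1,\dots,a_{n-1})$), sets $A^+_n=\{(e,0)\}\cup\{(a_k,1):k>n\}$, and takes as basic neighborhoods the sets $A[n_k]=\bigcup_l A^+_{n_1}\cdots A^+_{n_l}$, which are closed under the required products by construction. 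Note also that in the paper $\gamma$ and $\sigma$ are genuinely different topologies: the $\gamma$-neighborhoods of the identity are \emph{countable} subsets of $\langle x_0\rangle\times\mathbb{Z}$ (no $W_n\times\{0\}$ part at all), and $\sigma$ is the coarser topology that restricts to $\tau$ on $G\times\{0\}$; you conflate the two.

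Second, your plan for part (2) --- making each slab $G\times\{k\}$ relatively open so that $(G\times\mathbb{Z},\gamma)$ is a (near-)sum of copies of a refinement of $(G,\tau)$ --- cannot work for an arbitrary Hausdorff strongly unbounded $(G,\tau)$: such a sum is a paracompact $\sigma$-space only if $(G,\tau')$ is, and nothing in the hypotheses provides a $\sigma$-discrete network or a clopen base for $G$ itself. The whole point of the paper's $\gamma$ is that it is \emph{locally countable}, hence submetrizable by Theorem~\ref{t5}; the $\sigma$-discrete network is then manufactured from the seminorm $|x|=|\phi(x)|$, where $\phi:G\to\mathbb{Q}$ extends the identity on $\langle x_0\rangle\cong\mathbb{Z}$ by divisibility of $\mathbb{Q}$. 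Third, for part (3) you invoke the wrong criterion: the Henriksen--Isbell condition (countable type) characterizes \emph{Lindel\"of} remainders, not pseudocompact ones, and aiming for \v{C}ech-completeness or ``local compactness at infinity'' points in the opposite direction. The correct tool is \cite[Corollary 4.3]{A2009}: the remainder of a paratopological group in every Hausdorff compactification is pseudocompact provided the group has no strong $\pi$-base at any compact subset, and the paper verifies exactly this (Claim 4) by a two-case argument that, given any candidate countable $\pi$-base at a compact $K$, extracts a closed discrete set disjoint from $K$ meeting infinitely many of its members. Without replacing your part (2) and part (3) arguments along these lines, the proof does not go through.
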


\begin{proof}
Since $G$ is strongly unbounded paratopological group, there exist a non-periodic element $x_{0}$ of $G$ and open neighborhood $U$ of $e$ such that $\langle x_{0}\rangle\cap UU^{-1}=\{e\}$ and $\langle x_{0}\rangle$ is closed in $G$. Obvious, $\langle x_{0}\rangle$ is an abelian group. Then the mapping $f: (nx_{0}, m)\mapsto (n, m)$ is is naturally isomorphic from $\langle x_{0}\rangle\times \mathbb{Z}$
 onto a group $\mathbb{Z}\times \mathbb{Z}$. We may assume that $\mathbb{Z}\times \mathbb{Z}\subset G\times \mathbb{Z}$. Now we define a zero dimensional paratopological group topology on $G\times \mathbb{Z}$.
Obvious, we can define a positively natural number sequence $\{a_{n}\}$ satisfies the following conditions: \\
(1) $a_{n}>n$;\\
(2) $a_{n}> 2a$ for each $a\in X(a_{1}, \cdots, a_{n-1}).$\\
Define a base $\mathscr{B}_{\gamma}$ at the neutral element of paratopological group topology $\gamma$ on the group $G\times\mathbb{Z}$ as
follows. Put $A^{+}_{n}=\{(e, 0)\}\cup \{(a_{k}, 1): k>n\}$. For every strictly increasing sequence $\{n_{k}\}$
put $A[n_{k}]=\bigcup_{l\in\mathbb{N}}A^{+}_{n_{1}}\cdots A^{+}_{n_{l}}$. Put $\mathscr{B}_{\gamma}=\{A[n_{k}]\}$. Then $\gamma$ is a zero dimensional paratopological group topology on $G\times \mathbb{Z}$,  see \cite[Lemma 3]{RO}.

(1) By the proof of \cite[Lemma 3]{RO}, we can define a topology $\sigma$ on $G\times \mathbb{Z}$ such that $\sigma|_{G\times\{0\}}=\tau$, $\sigma\subset \gamma$ and $\sigma|_{\mathbb{Z}\times\mathbb{Z}}=\gamma|_{\mathbb{Z}\times\mathbb{Z}}$. Let $\gamma|_{\mathbb{Z}\times\mathbb{Z}}=\xi$.
Since $(\mathbb{Z}\times \mathbb{Z}, \xi)$ is zero dimensional and countable, the space $(\mathbb{Z}\times \mathbb{Z}, \xi)$ is Tychonoff.

{\bf Claim 1}: $(\mathbb{Z}\times \mathbb{Z}, \xi)$ is a closed in $(G\times \mathbb{Z}, \sigma)$.

Since $\langle 1\rangle\cap UU^{-1}=\{e\}$ and $\langle 1\rangle$ is closed in $G$, it is easy to see that $(\mathbb{Z}\times \mathbb{Z}, \xi)$ is closed in $(G\times \mathbb{Z}, \sigma)$.

{\bf Claim 2}:  $(\mathbb{Z}\times \mathbb{Z}, \xi)$ contains a closed copy of $S_{\omega}$.

For each $n\in \mathbb{N}$, let $\beta_{n}=\{(na_{n+k}, n)\}_{k\in\mathbb{N}}$. Obvious, each $\beta_{n}$ converges to $(0, 0)$ as $k\rightarrow \infty$.

Let $X=\{(na_{n+k}, n): k, n\in \mathbb{N}\}\cup\{(0, 0)\}$. It is easy to see that $X$ is a closed copy of $S_{\omega}$.

{\bf Claim 3}:  $(\mathbb{Z}\times \mathbb{Z}, \xi)$ contains a closed copy of $S_{2}$.

Let $\alpha_{0}=\{(a_{k}, 1)\}_{k\in\mathbb{N}}$. For each $n\in \mathbb{N}$, let $\alpha_{n}=\{(a_{n}+(n-1)a_{k}, n)\}_{k\in\mathbb{N}}$. Obvious, $\alpha_{0}$ converges to $(0, 0)$ as $k\rightarrow \infty$ and each $\alpha_{n}$ converges to $(a_{n}, 1)$ as $k\rightarrow \infty$.

Let $X=\{(a_{n}+(n-1)a_{k}, n): k, n\in \mathbb{N}\}\cup\alpha_{0} \cup\{(0, 0)\}$. It is easy to see that $X$ is a closed copy of $S_{2}$.

(2) Let $i: \mathbb{Z\rightarrow \mathbb{}Z}$ be the identity map. Since $\mathbb{Q}$ is a divisible group, the map $i$ can be extended to a homomorphism $\phi: G\rightarrow \mathbb{Q}$. Put $|x|=|\phi(x)|$ for every
element $x\in G$. Then $|\cdot|$ is a seminorm on the group $G$ such that for all $x, y\in G$ holds $|x+y|\leq |x|+|y|$.
For each $n\in\mathbb{N}$, let $\mathscr{B}_{n}=\{\{(x, y)\}: \phi (x)\leq n, y\in\mathbb{Z}\}$. Then it is easy to see that each $\mathscr{B}_{n}$ is a discrete family of closed subsets. Since it follows from Theorem~\ref{t5} that the space $(G\times\mathbb{Z}, \gamma)$ is submetrizable, there exists a metrizale topology $\mathscr{F}$ such that $\mathscr{F}\subset \gamma$. Obvious, $\bigcup_{n\in\mathbb{N}}\mathscr{B}_{n}$ is a network for $(G\times \mathbb{Z}, \mathscr{F})$, and it follows from \cite[Theorem 7.6.6]{A2008} that $(G\times\mathbb{Z}, \gamma)$ is a paracompact $\sigma$-space. Moreover, each subspace $F_{n}=\bigcup\mathscr{B}_{n}$ is strongly zero-dimensional and $G\times\mathbb{Z}=\bigcup_{n\in\mathbb{N}}F_{n}$, and hence $(G\times\mathbb{Z}, \gamma)$ is strongly zero-dimensional by \cite[Theorem 2.2.7]{E1978}.

(3) {\bf Claim 4}: $(G\times \mathbb{Z}, \gamma)$ has no strong $\pi$-base\footnote{A {\it strong $\pi$-base} of a space $X$ at a subset $F$ of $X$ is an infinite family $\gamma$ of non-empty open subsets of $X$ such that
every open neighborhood of $F$ contains all but finitely many elements of $\gamma$. Clearly, a strong $\pi$-base can be always
assumed to be countable.} at any compact subset of $G\times \mathbb{Z}$.

Indeed, suppose there exists a compact subset $K\subset G\times \mathbb{Z}$ such that $K$ has a strong $\pi$-base $\varphi$. It is easy to see that the set $\{n: K\cap (G\times\{n\})\neq\emptyset, n\in \mathbb{Z}\}$ is finite. Moreover, without loss of generalization, we may assume that strong $\pi$-base $\varphi$ at $K$ is countable and each element of $\varphi$ is an open neighborhood of some point in $G\times \mathbb{Z}$. Let $A=\{(b_{i}, n(i)): i\in\mathbb{N}, n(i)\in\mathbb{Z}\}$, and let $\varphi=\{(b_{i}, n(i))+A[n_{k}^{i}]: i\in \mathbb{N}\}$.

Case 1: The set $\{n(i): i\in\mathbb{N}\}$ is infinite.

Then it is easy to see that $A$ contains a discrete closed subset $B$ such that $B\cap K=\emptyset$. Let $U$ be an open neighborhood at $K$. Then $U\setminus B$ is also  an open neighborhood at $K$, and each $(b_{i}, n(i))+A[n_{k}^{i}]\nsubseteq U\setminus B$, which is a contradiction.

Case 2: The set $\{n(i): i\in\mathbb{N}\}$ is finite.

Let $n_{0}=\mbox{max}(\{n(i): i\in\mathbb{N}\}\bigcup\{n: K\cap (G\times\{n\})\neq\emptyset, n\in \mathbb{Z}\}\bigcup\{0\})+1$. For each $i+n_{0}\in\mathbb{N}$, we can choose a $(c_{_{i}}, i+n_{0})\in (b_{i}, n(i))+A[n_{k}^{i}]$. Then $\{(c_{_{i}}, i+n_{0}): i\in\mathbb{N}\}$ is closed and discrete subset in $(G\times \mathbb{Z}, \gamma)$ and $\{(c_{_{i}}, i+n_{0}): i\in\mathbb{N}\}\cap K=\emptyset$. Let $U$ be an open neighborhood at $K$. Then $U\setminus \{(c_{_{i}}, i+n_{0}): i\in\mathbb{N}\}$ is also  an open neighborhood at $K$, and each $(b_{i}, n(i))+A[n_{k}^{i}]\nsubseteq U\setminus \{(c_{_{i}}, i+n_{0}): i\in\mathbb{N}\}$, which is a contradiction.

It follows from Claim 4 and \cite[Corollary 4.3]{A2009} that $b(G\times \mathbb{Z})\setminus (G\times \mathbb{Z})$ is pseudocompact.
\end{proof}

{\bf Remark} (1)  Sorgenfrey line $(\mathbb{R}, \tau)$ is a Hausdorff strongly unbounded paratopological group, and hence we can define a Hausdorff paratopological group topology $\sigma$ on $G\times \mathbb{Z}$  such that $\sigma|_{G\times\{0\}}=\tau$ and $(G\times \mathbb{Z}, \sigma)$ contains closed copies of $S_{2}$ and $S_{\omega}$;

(2) Let $\mathbb{Q}$ be the rationals with the subspace topology of usual topology $\mathbb{R}$. Then $\mathbb{Q}$ is a topological group. It easily check that $\mathbb{Q}$ is a strongly zero-dimensional, nowhere locally compact, paracompact $\sigma$-space. In additional, $\mathbb{Q}$ has a strong $\pi$-base at each point since it is first-countable. It follows from \cite[Lemma 2.1]{A3} that the remainder of any Hausdorff compactification of $\mathbb{Q}$ is not pseudocompact.

A paratopological group $G$ is said to have the property ($^{\star\star}$), if there exists a
sequence $\{x_{n}: n\in\mathbb{N}\}$ of $G$ such that $x_{n}\rightarrow e$ and $x_{n}^{-1}
\rightarrow e$.

In \cite{LC}, C. Liu proved the following theorem.

\begin{theorem}\cite{LC}\label{t2}
Let $G$ be a paratopological group having the property ($^{\star\star}$). Then
$G$ has a (closed) copy of $S_{2}$ if it has a (closed) copy of $S_{\omega}$.
\end{theorem}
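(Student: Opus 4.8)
The plan is to manufacture the Arens space $S_2$ inside $G$ by ``threading'' the convergent sequences of the fan $S_\omega$ onto the two‑sided null sequence provided by property $(^{\star\star})$. First I would fix a (closed) copy $F\cong S_\omega$ of $S_\omega$ in $G$ and, since left translations are homeomorphisms of a paratopological group, translate so that the common limit point of $F$ is the neutral element $e$; write the defining sequences of $F$ as $\{a_k(m):m\in\mathbb N\}$, so that $a_k(m)\to e$ as $m\to\infty$ for every $k$, and a set $U\ni e$ is open in $F$ exactly when it contains a tail of each column. Let $\{c_k:k\in\mathbb N\}$ be the sequence from $(^{\star\star})$, with $c_k\to e$ and $c_k^{-1}\to e$; after passing to a subsequence I may assume the $c_k$ are distinct and distinct from $e$. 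I then set
\[
Y=\{e\}\cup\{c_k:k\in\mathbb N\}\cup\{c_k\,a_k(m):k,m\in\mathbb N\},
\]
intending $e$ to play the role of $\infty$, the points $c_k$ the role of the middle points $x_k$, and the products $c_k a_k(m)$ the role of the isolated points $x_k(m)$; a further thinning makes all these points distinct.

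The routine convergences are immediate from joint continuity of multiplication: for fixed $k$ one has $c_k a_k(m)\to c_k\cdot e=c_k$ as $m\to\infty$ (each column converges to its middle point), while $c_k\to e$ gives convergence of the middle points to the apex. The first substantive point is that $e$ lies in the closure of the isolated points with the correct neighbourhood trace. Given a neighbourhood $W$ of $e$, choose $O$ with $OO\subseteq W$; then $c_k\in O$ for all large $k$, and for each such $k$ there is a threshold $M_k$ with $a_k(m)\in O$, hence $c_k a_k(m)\in OO\subseteq W$, for $m\ge M_k$. Thus every neighbourhood of $e$ in $G$ swallows a tail of all but finitely many columns, which is precisely the neighbourhood pattern of $\infty$ in $S_2$. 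Consequently the bijection $\Phi:S_2\to Y$ given by $\infty\mapsto e$, $x_k\mapsto c_k$, $x_k(m)\mapsto c_k a_k(m)$ is continuous.

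The crux, and the only place where $(^{\star\star})$ is used in full strength, is the reverse inclusion: I must show that the subspace topology on $Y$ is not strictly finer than $S_2$ at $e$, i.e. that $e$ really carries the Arens (non-Fr\'echet--Urysohn) neighbourhood base. Concretely I would prove that whenever $A$ is a set of isolated points meeting each column in a finite set, then $e\notin\overline A$. The mechanism is the two-sided null sequence: if $c_k a_k(m)\in O$ with $k$ large (so that also $c_k^{-1}\in O$), then $a_k(m)=c_k^{-1}\bigl(c_k a_k(m)\bigr)\in OO\subseteq W$. Assuming $e\in\overline A$ and running this for every neighbourhood $W$ forces $e$ into the closure, computed inside $F$, of a subset of $F$ that meets each column of $S_\omega$ only finitely often; but in $S_\omega$ such a set omits a whole neighbourhood of the apex, a contradiction. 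This step genuinely needs $c_k^{-1}\to e$, not merely $c_k\to e$, so that left translation by $c_k^{-1}$ pushes the product back into $F$, and it uses only that $F$ carries the subspace topology (not closedness). The same computation rules out any sequence of isolated points converging to $e$, so $e$ is a true Arens point and $\Phi$ is a homeomorphism onto $Y$; hence $G$ contains a copy of $S_2$.

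For the parenthetical ``closed'' version I would upgrade the subspace argument to a closure computation in $G$, now using that $F$ is closed. A hypothetical $g\in\overline Y\setminus Y$ must be a limit involving either finitely many columns, which after a single translation by the relevant $c_k^{-1}$ becomes a limit point of $F$ lying outside $F$ --- impossible as $F$ is closed --- or infinitely many columns, which by the translation trick above reduces to the forbidden accumulation at the apex of $S_\omega$. I expect the main obstacle to be exactly this bookkeeping at $e$: separating the finitely many ``low'' columns and the pre‑tail initial segments from $e$ while keeping the argument translation‑compatible, and doing so without assuming more separation than is available. The cleanest route is to observe that all the points that matter lie in, or are translates of, the Hausdorff subspace $F$, which supplies the $T_1$-type separations the closure arguments require.
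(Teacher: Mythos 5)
The paper does not actually prove this theorem: it is quoted from \cite{LC} without proof, so there is no internal argument to compare against. Your construction --- translating the apex of the copy of $S_{\omega}$ to $e$, forming $Y=\{e\}\cup\{c_k\}\cup\{c_k a_k(m)\}$, and using $c_k^{-1}\to e$ to pull any accumulation at $e$ back into the copy of $S_{\omega}$, where sets meeting each column finitely are closed discrete --- is precisely the standard argument behind the cited result, and both the continuity of $\Phi$ at $\infty$ and the crux step (no Fr\'echet--Urysohn convergence to $e$ through the isolated points) are carried out correctly. The one place your sketch is thinner than it should be is the topology of $Y$ away from $e$: you still need each $c_k a_k(m)$ to be isolated in $Y$ and each $c_k$ to see only a tail of its own column, which requires the same translation trick ($a_j(n)=c_j^{-1}(c_j a_j(n))$ applied to a putative net from infinitely many columns) together with compactness, hence closedness in the Hausdorff setting, of the finitely many remaining translated columns $c_j(C_j\cup\{e\})$; this is routine when $F$ is closed, but in the non-closed version it is exactly where a little extra care (thinning the columns so that no $c_k a_k(m)$ lies in $\bigcap_{J}\overline{\{a_j(n):j\ge J\}}$) is needed, and your remark that closedness is never used understates this.
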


It is natural to ask the following.

\begin{question}\cite{LC}\label{q10}
Can we omit the property ($^{\star\star}$) in Theorem~\ref{t2}.
\end{question}

By Lemma~\ref{l5}, the space $G$ in Theorem~\ref{t3} is not H-closed. Moreover, it is easy to see that the paratopological group topology on $\mathbb{Z\times\mathbb{Z}}$ in Theorem~\ref{t3} does not have the the property ($^{\star\star}$). Then we have the following questions.

\begin{question}
Let $(G, \tau)$ be a H-closed paratopological group. Does there exist a Hausdorff paratopological group topology $\sigma$ on $G\times \mathbb{Z}$  such that $\sigma|_{G\times\{0\}}=\tau$ and $(G\times \mathbb{Z}, \sigma)$ contains closed copies of $S_{2}$ and $S_{\omega}$?
\end{question}

\begin{question}
Let $G$ be a not H-closed paratopological group. Is it  true that
$G$ has a (closed) copy of $S_{2}$ if it has a (closed) copy of $S_{\omega}$?
\end{question}

\begin{question}
Let $G$ be a H-closed paratopological group. Is it  true that
$G$ has a (closed) copy of $S_{2}$ if it has a (closed) copy of $S_{\omega}$?
\end{question}

{\bf Acknowledgements}. We wish to thank
the reviewers for the detailed list of corrections, suggestions to the paper, and all her/his efforts
in order to improve the paper.

\end{document}